\newcommand{\Kd}{\operatorname{K}}
\newcommand{\kI}{\operatorname{I}}
\newcommand{\kII}{\operatorname{II}}
\newcommand{\kIII}{\operatorname{III}}
\newcommand{\kIV}{\operatorname{IV}}
\newcommand{\Emin}{\operatorname{\mathcal E}}
\newcommand{\Emind}{\operatorname{\mathcal E}^d}
\newcommand{\cond}[1]{\mathbf{#1}}
\newcommand{\Mod}[1]{\ (\mathrm{mod}\ #1)}
\newcommand*\circled[2][1.6]{\tikz[baseline=(char.base)]{
    \node[shape=circle, draw, inner sep=1pt, 
        minimum height={\f@size*#1},] (char) {\vphantom{WAH1g}#2};}}
\newtheorem{theorem}{Theorem}
\newtheorem{proposition}{Proposition}
\newtheorem{lemma}{Lemma}
\newcommand{\Q}{\mathbb Q}
\newcommand{\Z}{\mathbb Z}
\newcommand{\eclabel}[1]{\href{https://www.lmfdb.org/EllipticCurve/Q/#1}{\texttt{#1}}}
\begin{document}

\author{Enrique Gonz\'alez-Jim\'enez}
\address{Departamento de Matem\'aticas, Universidad Aut\'onoma de Madrid, Madrid, Spain
}
\email{enrique.gonzalez.jimenez@uam.es}

\author{Joan-C. Lario}
\address{Departament de Matem\`atiques, 
Universitat Polit\`ecnica de Catalunya\\
Barcelona, Catalunya }
\email{joan.carles.lario@upc.edu}

\thanks{The first author is supported by Grant PID2022-138916NB-I00, and the second author by Grant PID2022-136944NB-I00; both funded by MCIN/AEI/10.13039/501100011033 and by ERDF A way of making Europe.}

\title{Faltings elliptic curves in twisted $\mathbb{Q}$-isogeny classes}
\date{\today}
\subjclass{Primary: 11G05, Secondary: 14H52, 14G25, 14K02.}

\begin{abstract}
Let $G$ be the graph attached to the $\Q$-isogeny class of an elliptic curve defined over~$\Q$: that is, a vertex for every elliptic curve defined over $\Q$ in the isogeny class, and edges in correspondence with the  prime degree rational isogenies 
between them. Stevens \cite{S} shows that there is a unique elliptic curve in $G$ with minimal Faltings height. We call  this curve the Faltings elliptic curve in $G$.

For every  square-free integer $d$, we consider the graph~$G^d$
attached to the twisted elliptic curves in $G$ 
by the quadratic character of $\Q(\sqrt{d})$.
It turns out that $G$ and $G^d$ are canonically isomorphic as abstract graphs
(the isomorphism identifies the vertices with equal $j$-invariant). 
In this paper we determine which vertex is the Faltings elliptic curve in $G^d$. We also obtain the probability of a vertex in $G$ to be the Faltings elliptic curve in~$G^d$. 
It turns out that this probability depends on the $p$-adic valuations of rational values of certain modular functions.
\end{abstract}

\maketitle

\vskip 0.2truecm

\keywords{Keywords: Elliptic curves;
Faltings height; isogeny classes;
modular curves; modular functions.}

\section{Introduction} 
The main object of this paper is
the $\Q$-isogeny classes of elliptic curves defined over the rationals and their associated isogeny graphs. For an elliptic curve $E$ over $\mathbb{Q}$, we shall denote by $\operatorname{Isog}(E)$ its $\mathbb{Q}$-isogeny class, and by $G=G(E)$ the isogeny graph attached to it: that is, one vertex for each elliptic curve in $\operatorname{Isog}(E)$ and an edge for every prime degree rational isogeny connecting two vertices.

Given a $\Q$-isogeny class of an elliptic curve defined over $\Q$, it is natural to ask if there is any distinguished elliptic curve in the isogeny class.
First, Mazur and Swinnerton-Dyer \cite{MS} proposed the so-called {\it strong} curve which is an optimal quotient of the Jacobian of the modular curve $X_0(M)$, where $M$ is the conductor of the isogeny class.
Later, Stevens \cite{S} suggested that it is better to consider the elliptic curve which is an optimal quotient of the Jacobian of the modular curve~$X_1(M)$. In both cases, the Manin constant plays a role, and the Stevens proposal seems to be more intrinsically arithmetic due to the intervention of N\'eron models, \'etale isogenies, and Faltings heights (independently of modularity parametrizations). In this paper, we define the Faltings elliptic curve as the one with minimal Faltings height in the isogeny class. Conjecturally, the Faltings elliptic curve is an optimal quotient of the Jacobian of $X_1(M)$. See \cite{S} and \cite{Vatsal}.

The different possibilities for the $\Q$-isogeny graphs of elliptic curves over $\Q$ are classified into different types:
$$
\begin{tblr}{l}
L_1, \\
L_2(p) 
\text{ for $p$ in $\{ 2,3,5,7,11,13,17,19,37,43,67,163\}$}, \\
L_3(9), L_3(25), L_4, T_4, T_6, T_8, \\
R_4(N) \text{ for $N$ in $\{ 6,10,14,15,21\}$}, \\
R_6, S_8.
\end{tblr}
$$

The subscripts denote the number of vertices of the graph, the letter indicates the shape of the isogeny graph ($L$ line, $T$ tetrahedron, $R$ rectangular, $S$ special), and the level in parentheses refers to the maximal isogeny 
degree of a path 
in the graph (absence of level means isogenies of degree $2$ or~$3$).

This classification is based on the knowledge of the non-cuspidal rational points of the modular curves $X_0(N)$. First, Mazur \cite{M} dealt with the case of prime degree rational isogenies of elliptic curves defined over~$\mathbb{Q}$. The  classification for arbitrary degrees, prime or composite, was completed thanks to the work of Fricke, Kenku, Klein, Kubert, Ligozat, Mazur and Ogg, among others. In particular, 
Kenku shows that there are at most 8 elliptic curves in each isogeny class over $\mathbb{Q}$. We refer to Chiloyan--Lozano-Robledo \cite{C} for more details on the classification and the sketch of its proof.

\begin{longtblr}
[label={types}, caption={Types of $\Q$-isogeny graphs}]
{cells = {mode=imath},hlines,vlines,measure=vbox,
colspec=cll,rowhead=1}
\text{type} &\SetCell[c=2]{l}  \text{$\Q$-isogeny graph} & \\
L_2(p) &
\begin{tikzcd}[ampersand replacement=\&] 
E_1 \arrow[dash]{r}{p} \& E_p 
\end{tikzcd}
&
p=2,3,5,7,11,13,17,19,37,43,67,163
\\
L_3(p^2) &
\begin{tikzcd}[ampersand replacement=\&] 
E_1 \arrow[dash]{r}{p} \& E_p  \arrow[dash]{r}{p} \& E_{p^2}   
\end{tikzcd}
&
p=3,5
\\
L_4 &\SetCell[c=2]{l}
\begin{tikzcd}[ampersand replacement=\&] 
E_1 \arrow[dash]{r}{3} \& E_{3}\arrow[dash]{r}{3} \& E_{9}\arrow[dash]{r}{3} \& E_{27}
\end{tikzcd}
&
\\
R_4(pq) &
\begin{tikzcd}[ampersand replacement=\&] 
E_1 \arrow[dash]{r}{q} 
    \arrow[dash]{d}{p} \& 
    E_{q}  \arrow[dash]{d}{p} \\
 E_p \arrow[dash]{r}{q} \& E_{pq}   
\end{tikzcd}
&
 (p,q)=(2, 3), (2, 5), (2, 7), (3, 5), (3, 7)
\\
R_6 & \SetCell[c=2]{l}
\begin{tikzcd}[ampersand replacement=\&] 
  E_1 \ar[dash,swap,d,"2"] \ar[dash,r,"3"]   \& E_3\ar[dash,d,"2"] \ar[dash,r,"3"]  \& E_9  \ar[dash,d,"2"]  \\
  E_2 \ar[dash,swap,r,"3"] \& E_{6} \ar[dash,swap,r,"3"] \& E_{18}  
\end{tikzcd}
&
\\
T_4 & \SetCell[c=2]{l}
 \scalebox{.8}{
 \begin{tikzcd}[ampersand replacement=\&] 
\& E_1 \ar[dash,d,"2"] \& \\
\& E_2 \ar[dash,ld,swap,"2"] \ar[dash,rd,"2"] \& \\
E_4   \& \&  E_{12}   
\end{tikzcd}
}
&
\\
T_6 & \SetCell[c=2]{l}
\scalebox{.8}{
\begin{tikzcd}[ampersand replacement=\&] 
E_1 \ar[dash,dr,"2"] \& \& \& E_8\\
\& E_2 \ar[dash,r,"2"] \& E_4  \ar[dash,ur,"2"]  \ar[dash,dr,"2"]\& \\
E_{12}    \ar[dash,ur,"2"] \&  \&  \& E_{22}    
\end{tikzcd}
}
&
\\
T_8 &  \SetCell[c=2]{l}
\scalebox{.8}{
\begin{tikzcd}[ampersand replacement=\&] 
\& E_{21} \ar[dash,d,"2"] \& \& E_{81} \ar[dash,d,"2"] \& \\
\& E_{2} \ar[dash,swap,dl,"2"] \ar[dash,dr,"2"] \& \& E_{8} \ar[dash,dl,swap,"2"] \ar[dash,dr,"2"] \& \\
 E_1 \&  \& E_4 \ar[dash,d,"2"] \& \& E_{16}  \\
 \&  \& E_{41} \& \&    
\end{tikzcd}
}
&
\\
S_8 & \SetCell[c=2]{l}
\scalebox{.85}{
 \begin{tikzcd}[ampersand replacement=\&] 
  \& E_1 \ar[dash,swap,d,"2"] \ar[dash,r,"3"]   \& E_3 \ar[dash,d,"2"] \& \\
  \& E_2 \ar[dash,swap,dl,"2"] \ar[dash,swap,dr,"2",pos=1/6] \ar[dash,r,"3"]   \& E_6  \ar[dash,dl,"2",pos=1/6] \ar[dash,dr,"2"]  \& \\
    E_{21} \ar[dash,r,"3"] \&   E_{12} \& E_ 4 \ar[dash,r,"3"]    \& E_{31}  
\end{tikzcd}
}
&
\end{longtblr}



For every square-free integer $d$, we denote by $E^d$ the quadratic twist of the elliptic curve $E$ by~$\mathbb{Q}(\sqrt{d})$, and  let $G^d=G(E^d)$ be the graph attached to the $\mathbb{Q}$-isogeny class of $E^d$.
It turns out that $G$ and $G^d$ are canonically isomorphic as labeled graphs (the isomorphism identifies the vertices with equal $j$-invariant). Our aim in this paper is to discuss which vertex in $G$ is the Faltings elliptic curve in the twisted graphs $G^d$. In particular, we obtain the probability of a vertex to be the Faltings elliptic curve in $G^d$ in terms of $d$ and the elliptic curve $E$. 
As we shall see, in many cases, this probability depends on the $p$-valuations of rational values of modular functions. See Theorem \ref{MainTheorem} in Section \ref{sec_MainTheorem}.

The plan of the paper is as follows. In Section \ref{terminology} we fix the terminology
and review several general facts. In Section
\ref{strategy} we describe the strategy to achieve our goal. In Section \ref{examples} we present the results and provide the details for the $\mathbb{Q}$-isogeny graphs  of types $L_3(9)$ and $L_2(11)$.
Section \ref{sec_MainTheorem} contains the results for all the remaining cases
of $\Q$-isogeny graphs. 
Our computations have been performed using a combination of \verb|Magma| \cite{magma}, \verb|Mathematica| \cite{mathematica}, and \verb|SageMath| \cite{sage}. We refer to the supplementary material at the GitHub repository \cite{githubrepo} for the precise data and related information for all cases.

To finish this introduction, we include a result that will be useful in order to compute the probability of a vertex to be the Faltings curve in the twisted $\Q$-isogeny classes.

\begin{lemma}\label{analyticNT}
The proportion of all square-free integers that are divisible by a prime $p$ is $
\displaystyle{\frac{1}{1+p}}$.
\end{lemma}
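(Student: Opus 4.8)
The plan is to read ``proportion'' as a ratio of natural densities: write $Q(x)$ for the number of square-free integers in $[1,x]$ and $f(x)$ for the number of those divisible by $p$, so that the claim is $f(x)/Q(x)\to 1/(1+p)$. Note that the naive guess would be $1/p$, the density of multiples of $p$ among \emph{all} integers, but conditioning on square-freeness lowers it, because a square-free multiple of $p$ is divisible by $p$ to exactly the first power. I would start from the classical estimate $Q(x)=x/\zeta(2)+O(\sqrt{x})$, which comes from $\mu^{2}(n)=\sum_{d^{2}\mid n}\mu(d)$ and swapping the order of summation.

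The one real idea is a self-similar description of the square-free multiples of $p$: an integer $n$ is square-free and divisible by $p$ exactly when $n=pm$ with $m$ square-free and coprime to $p$, and a square-free integer is coprime to $p$ exactly when it is not one of the integers being counted. This gives, for every $x$,
\[
f(x)=Q(x/p)-f(x/p),
\]
with $f(x)=0$ once $x<p$. Unwinding the recursion produces the finite alternating sum $f(x)=\sum_{k\ge 1}(-1)^{k-1}Q(x/p^{k})$.

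Plugging in $Q(y)=y/\zeta(2)+O(\sqrt{y})$, evaluating the geometric series $\sum_{k\ge 1}(-1)^{k-1}p^{-k}=1/(p+1)$ (the omitted tail contributing $O(1/x)$ after multiplication by $x$), and bounding the accumulated error by $O\!\big(\sqrt{x}\sum_{k\ge1}p^{-k/2}\big)=O(\sqrt{x})$, I get $f(x)=x/\big(\zeta(2)(p+1)\big)+O(\sqrt{x})$. Dividing by $Q(x)=x/\zeta(2)+O(\sqrt{x})$ and letting $x\to\infty$ yields $1/(1+p)$. The error-term bookkeeping is the only non-routine part; there is no genuine obstacle.

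As a cross-check, the same answer drops out of computing the two densities as convergent Euler products: the square-free integers have density $\prod_{q}(1-q^{-2})$, while the square-free multiples of $p$ have density $(p^{-1}-p^{-2})\prod_{q\ne p}(1-q^{-2})$, since one imposes $q^{2}\nmid n$ for $q\ne p$ and $p\,\|\,n$ for $q=p$; the ratio is $(p^{-1}-p^{-2})/(1-p^{-2})=1/(1+p)$.
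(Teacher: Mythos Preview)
Your argument is correct. The recursion $f(x)=Q(x/p)-f(x/p)$ is valid because the map $n\mapsto n/p$ is a bijection between square-free multiples of $p$ up to $x$ and square-free integers coprime to $p$ up to $x/p$; unwinding it into a finite alternating sum and inserting $Q(y)=y/\zeta(2)+O(\sqrt{y})$ gives the stated asymptotic with no gaps. The Euler-product cross-check is also fine.

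The paper takes a shorter but less self-contained route: it simply quotes a theorem of Brown stating that the natural density of square-free integers divisible by every prime in a finite set $P$ and by no prime in a set $Q$ equals $\dfrac{6}{\pi^{2}}\prod_{p\in P}\dfrac{1}{1+p}\prod_{q\in Q}\dfrac{q}{1+q}$, then specializes to $P=\{p\}$, $Q=\emptyset$ and divides by the square-free density $6/\pi^{2}$. In effect, your Euler-product cross-check \emph{is} the special case of Brown's formula used in the paper, so that part of your write-up matches the paper's method almost exactly. What is genuinely different is your primary argument via the self-similar recursion: it avoids citing an external density theorem and instead reduces everything to the single classical estimate for $Q(x)$, at the cost of a little error-term bookkeeping. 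The trade-off is that the paper's proof is two lines given Brown, while yours is elementary and independent of that reference.
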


\begin{proof}
Brown \cite[Theorem 1]{Brown} shows that if $P$ and $Q$ are disjoint sets of prime numbers with $P$ finite, then
the proportion of all numbers which are square-free and divisible by all of the primes in $P$ and by none of the primes in $Q$ is
$$
\displaystyle \frac{6}{\pi^2}\prod_{p\in P}\frac{1}{1+p}\prod_{q\in Q}{\frac{q}{1+q}}.
$$
If $P=Q=\emptyset$ we obtain that the proportion of square-free numbers in $\Z$ is $\frac{6}{\pi^2}$. On the other hand, if  $P=\{p\}$ and $Q=\emptyset$ we obtain that the proportion of those that are divisible by $p$ is $\frac{6}{\pi^2}\frac{1}{1+p}$. 
Thus we obtain the desired result.
\end{proof}

\section{Preliminaries}\label{terminology}

\noindent {\bf Terminology.} Let $E$ be an elliptic curve defined over $\Q$ given by a Weierstrass equation
$$
E\colon y^2+a_{1}xy+a_{3}y=x^3+a_{2}x^2+a_{4}x+a_{6}\,.
\label{Weiers_equation}
$$
We shall denote its signature  
by $\operatorname{sig}(E)=(c_4,c_6,\Delta)$, where the 
$c$-invariants \( c_4 \), \( c_6 \), and the discriminant $\Delta$ are obtained from the defining equation of $E$ through the standard formulas (see~\cite{Sil}, Chapter III.1). For every prime $p$, we shall denote by
$\operatorname{sig}_p(E)=(v_p(c_4),v_p(c_6),v_p(\Delta))$
where $v_p$ denotes the standard $p$-adic valuation.
Additionally, we consider:
$$
\begin{tblr}{ll}
j  =\displaystyle{\frac{c_4^3}{\Delta}}& \text{$j$-invariant of $E$,} \\[3pt]
\omega=
\displaystyle{\frac{dx}{2y+a_1x+a_3}}
& \text{invariant differential of $E$,}\\[5pt]
\Lambda =
\lambda \langle 1,\tau \rangle &  \text{period lattice of $E$,}
\end{tblr}
$$
with
$\lambda\in \mathbb{C}^*$ and $\tau$
in the Poincaré half-plane $\mathbb{H}$. Via the uniformization theorem, the complex points of the elliptic curve $E$ can be identified with the complex torus $\mathbb{C}/\Lambda$. One has
$j=j(\tau)$ through the Klein modular function. For future use, 
we shall denote by $\operatorname{vol}(\Lambda)$ the volume of the period lattice of $E$; that is, $\operatorname{vol}(\Lambda)=|\lambda|^2\operatorname{Im}(\tau)$.
 
\vskip 0.3truecm 

\noindent {\bf Isomorphisms.} An elliptic curve $E'/\Q$ 
is isomorphic to $E/\Q$ if it can be obtained from $E$  
through a Weierstrass change of variables:
$$
\begin{array}{l@{\,=\,}l}
x & u^2 x'+ r \\[4pt]
y & u^3y'+u^2sx'+t
\end{array}
$$
with $u,r,s,t$ in $\mathbb{Q}$, $u\neq 0$. Following the above notations, one has:
$$
\begin{array}{lclcl}
u^4 c_4' = c_4 & & j' = j & &\\[4pt]
u^6 c_6' = c_6 & & u^{-1} \omega' = \omega & & \\[4pt]
u^{12}\Delta' = \Delta & & u^{-1} \Lambda' = \Lambda & & |u|^{-2}\operatorname{vol}(\Lambda') = \operatorname{vol}(\Lambda)\,.
\end{array}
$$


\vskip 0.2truecm 

\noindent {\bf Isogenies.}
Let \(E_1\) and \(E_2\) be elliptic curves defined over $\mathbb{Q}$ such that there is
an isogeny  
\(\phi: E_1 \to E_2\) defined over $\Q$.
   The induced map $\phi^*$ on tangent spaces 
    yields to
    
    $$
    \phi^*(\omega_2) = c \, \omega_1
    $$

\vskip 0.3truecm

\noindent for some $c\in\Q^*$, where $\omega_i$ denotes
    the invariant differential of $E_i$. The constant $c$ is called the scaling factor of the isogeny. We shall say that the isogeny $\phi$ is normalized when $c=1$. 

\begin{lemma}
\label{normal}
Let $\varphi\colon  \mathbb C/\Lambda\longrightarrow  \mathbb C/\Lambda'$ be a normalized isogeny of prime degree~$p$, then  $\operatorname{vol}(\Lambda)=p\operatorname{vol}(\Lambda')$.
\end{lemma}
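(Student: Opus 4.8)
The plan is to pass to the analytic description of isogenies of complex tori and reduce the statement to the elementary fact that the covolume of a sublattice is multiplied by its index. Recall that any isogeny $\varphi\colon \mathbb C/\Lambda \to \mathbb C/\Lambda'$ is induced by multiplication by some scalar $\alpha\in\mathbb C^*$ with $\alpha\Lambda\subseteq\Lambda'$, and that its degree equals the order of its kernel, which is the index $[\Lambda':\alpha\Lambda]$; in our situation this index is the prime $p$.

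The first step is to identify the scaling factor of $\varphi$ in terms of $\alpha$. Under the uniformization isomorphism $\mathbb C/\Lambda \to E(\mathbb C)$ attached to the Weierstrass $\wp$-function of $\Lambda$, the invariant differential $\omega$ pulls back to the translation-invariant form $dz$ on $\mathbb C/\Lambda$, and similarly for $\Lambda'$. Since $\varphi$ is the map $z\mapsto \alpha z$, one gets $\varphi^*(dz') = \alpha\, dz$, so the scaling factor of $\varphi$ is exactly $\alpha$. The hypothesis that $\varphi$ is normalized (scaling factor $1$) therefore forces $\alpha = 1$, i.e. $\Lambda\subseteq\Lambda'$ as lattices in $\mathbb C$, with $[\Lambda':\Lambda] = \deg\varphi = p$.

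It then remains to compare covolumes. Writing $\operatorname{vol}(\Lambda)$ for the area of a fundamental parallelogram of $\Lambda$ (so that $\operatorname{vol}(\Lambda) = |\lambda|^2\operatorname{Im}(\tau)$ when $\Lambda = \lambda\langle 1,\tau\rangle$, as recalled above), one has for any pair of lattices $\Lambda\subseteq\Lambda'$ in $\mathbb C$ the standard identity $\operatorname{vol}(\Lambda) = [\Lambda':\Lambda]\cdot\operatorname{vol}(\Lambda')$; this follows, for instance, by choosing compatible bases of $\Lambda$ and $\Lambda'$ via the theory of elementary divisors, or by a Haar-measure / change-of-variables argument. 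Combining this with $[\Lambda':\Lambda] = p$ from the previous step gives $\operatorname{vol}(\Lambda) = p\,\operatorname{vol}(\Lambda')$, as claimed.

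I do not anticipate a serious obstacle. The only point requiring care is the bookkeeping in the second step: one must check that the analytically defined multiplier $\alpha$ and the differential-theoretic scaling factor $c$ genuinely coincide, rather than differing by a reciprocal or a complex conjugate. This comes down to recalling the precise normalization of the Weierstrass uniformization (that $\omega$ corresponds to $dz$, with no extra constant), after which the rest is formal.
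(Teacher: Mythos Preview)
Your proof is correct and follows essentially the same approach as the paper: both reduce ``normalized'' to $\varphi(z)=z$, hence $\Lambda\subset\Lambda'$ of index $p$, and then compare covolumes. The only cosmetic difference is that the paper enumerates the $p+1$ index-$p$ overlattices of $\Lambda$ and computes their volumes explicitly, whereas you invoke the general index--covolume identity directly; both arguments are equally valid.
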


\begin{proof}
Write $\Lambda=\lambda \langle 1, \tau \rangle$ with $\lambda,\tau\in \mathbb C^*$ and $\operatorname{Im}(\tau)>0$. Since $\phi$ is normalized, we must have $\varphi(z)=z$; that is, 
$\Lambda\subset \Lambda'$ has index $p$. Hence, one has either $\Lambda'=\lambda/p\langle 1,p\tau\rangle$ or $\Lambda'=\lambda\langle 1,(\tau+k)/p\rangle$ for some $k=0,\dots,p-1$. Then the result follows directly by computing the volumes of the two lattices.
\end{proof}

    Given a finite subgroup \(H \subset E_1\), there exists a normalized isogeny \(\phi: E_1 \to E_2\) with \(\ker(\phi) = H\) (see \cite[III.4]{Sil}). The curve \(E_2\) and the isogeny \(\phi\) can be explicitly constructed using Vélu's formulas \cite{Velu}.

\vskip 0.3truecm 

\noindent {\bf Twists.} 
For every square-free integer $d$, we denote by $E^d$ the quadratic twist of the elliptic curve $E$ by $\Q(\sqrt{d})$. If $\operatorname{sig}(E)=(c_4,c_6,\Delta)$, then
$\operatorname{sig}(E^d)=(d^{2}c_4,d^{3}c_6,
d^{6}\Delta)$.

Let $\Emin$ denote a minimal model of~$E$.
Then,  $\operatorname{sig}(\Emin)=
(c_4(\Emin),c_6(\Emin),\Delta(\Emin))$ 
is composed of the three integers and the
discriminant $\Delta(\Emin)$ is minimal among the discriminants of all $\Q$-isomorphic elliptic curves to $E$. Observe that the signature $\operatorname{sig}(\Emind)$ does not need to be the signature of a minimal model of the quadratic twist $E^d$.
The following result is due to Pal and relates the signatures and the Néron lattices of the elliptic curves $E$ and $E^d$. In what follows, by Néron lattice we mean the period lattice attached to a minimal model.

\begin{proposition}[Pal \cite{Pal}]
Let $E$ be an elliptic curve over $\mathbb{Q}$ and 
let $\Emin$ denote a minimal model of~$E$.
Let $\Lambda(\mathcal E)$ be the Néron lattice of $E$. For every square-free integer $d$,
the N\'eron lattice $\Lambda^d$ of the quadratic twist $E^d$ is given by $$
\Lambda^d = 
\displaystyle{\frac{u(\Emind)}{\sqrt{d}}} \Lambda(\mathcal E)
$$
where $u(\Emind)=\prod_p u_p(\Emind)$ is the rational number obtained as a product over each prime~$p$ according to the following table:

\vskip 0.1truecm

\begin{longtblr}
[
label = {pal},
caption = {Pal values $u_p(\Emind)$}
]
{cells = {mode=imath},hlines,measure=vbox,colspec  = ccl,
vlines = {1-19}{solid}}
\SetCell[c=3]{c}  p\ne 2\\
d & u_p(\Emind)  &  \text{conditions}  \\
 d\not\equiv 0\,(p) & 1 &  \\
  \SetCell[r=2]{c}    d\equiv 0\,(p) & 1 &  \Kd_{p}(E)=\kI_n\, (n\ge 0), \kII,\kIII,\kIV \\
  & p& \Kd_{p}(E)=\kI^*_n\, (n\ge 0), \kII^*,\kIII^*,\kIV^* \\
\SetCell[c=3]{c} 
 \pagebreak
\SetCell[c=3]{c}  p= 2\\
d & u_p(\Emind)  &  \text{conditions}  \\
 d\equiv 1\,(4)& 1 &  \\
  \SetCell[r=6]{c}    d\equiv 2\,(4) &  2^{-1} & \operatorname{sig}_2(\Emin)=(0,0,\ge 0) \\
  & 4 & \operatorname{sig}_2(\Emin)=(6,9,\ge 18)\,\,\text{and}\,\, c_6(\Emin)2^{-10}d\equiv -1\,(4)\\ 
  &  \SetCell[r=3]{c} 1 & v_2(c_4(\Emin))=4\,\, \text{or}\,\, 5 \\
  & & v_2(c_6(\Emin))=3,5, \,\, \text{or}\,\,  7\\
  & & \operatorname{sig}_2(\Emin)=(\ge 6,6,6)\,\,\text{and}\,\, c_6(\Emin)2^{-7}d\equiv -1\,(4)\\ 
  & 2 & \text{otherwise}\\ 
  \SetCell[r=5]{c}   d\equiv 3\,(4) &  \SetCell[r=2]{c} 2^{-1}  &  \operatorname{sig}_2(\Emin)=(0,0,\ge 0) \\
   & &  \operatorname{sig}_2(\Emin)=(\ge 4,3,0) \\
   &  \SetCell[r=2]{c} 2  &  \operatorname{sig}_2(\Emin)=(4,6,\ge 12) \\
   & &  \operatorname{sig}_2(\Emin)=(\ge 8,9,12) \\
     & 1 & \text{otherwise}\\ 
\end{longtblr}
\vskip 0.3truecm

\noindent Here, $\Kd_p(E)$ denotes the Kodaira symbol of $E$ at $p$, and  $\operatorname{sig}_p(\mathcal E)=(v_p(c_4(\Emin)) ,
v_p(c_6(\Emin)),v_p(\Delta(\Emin)))$ is the $p$-signature of $\mathcal E$.
\end{proposition}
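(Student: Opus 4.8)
The plan is to separate a complex-analytic input from a purely local, prime-by-prime computation. Write $\Emind$ for the model of $E^d$ whose invariants are $(d^2c_4(\Emin),d^3c_6(\Emin),d^6\Delta(\Emin))$, and let $\Lambda(\Emind)$ denote its period lattice. First I would record that $\Lambda(\Emind)=\tfrac1{\sqrt d}\,\Lambda(\Emin)$: over $\Q(\sqrt d)$ there is an explicit isomorphism $E^d\to E$, and tracking its effect on the invariant differentials — it scales $\omega$ by $\sqrt d$ — together with the fact that it carries homology along isomorphically, shows that the two period lattices differ by precisely the factor $\tfrac1{\sqrt d}$ (so in particular $\operatorname{vol}(\Lambda(\Emind))=|d|^{-1}\operatorname{vol}(\Lambda(\Emin))$). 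Next, since $\Q$ has class number one, $E^d$ has a global minimal model, obtained from $\Emind$ by a Weierstrass change of variables; the parameters $r,s,t$ leave $\omega$ unchanged, so only the scaling parameter $u(\Emind)\in\Q^{*}$ intervenes, and one gets $\Lambda^d=u(\Emind)\,\tfrac1{\sqrt d}\,\Lambda(\Emin)$. Finally, writing $u(\Emind)=\pm\prod_p p^{\,n_p}$, each exponent $n_p$ is local: it measures the discrepancy at $p$ between $\Emind$ and a minimal model of $E^d$. So everything reduces to computing $u_p(\Emind):=p^{\,n_p}$ one prime at a time, which is exactly the content of the table.

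For an odd prime $p\nmid d$ there is nothing to do: $\Emind$ has the same $p$-signature as the minimal model $\Emin$, hence is already minimal at $p$, so $u_p(\Emind)=1$. For an odd prime $p\mid d$, passing to $\Emind$ shifts $\operatorname{sig}_p$ by $(2,3,6)$. When $p\ge5$ a model is non-minimal at $p$ exactly when $v_p(c_4)\ge4$ and $v_p(\Delta)\ge12$ (these together force $v_p(c_6)\ge6$), and comparing this with the possible $p$-signatures of the Kodaira types of a minimal model shows the shifted model loses minimality precisely for the starred types $\kI^*_n$ $(n\ge0)$, $\kII^*$, $\kIII^*$, $\kIV^*$; one reduction (scaling by $p$) then lands on the corresponding unstarred type, which is minimal. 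Hence $u_p(\Emind)=p$ in those cases and $u_p(\Emind)=1$ otherwise. For $p=3$ the minimality test has to be upgraded — an integral model over $\Z_3$ must have $v_3(c_6)\notin\{1,2\}$, so the admissible $3$-signatures of the Kodaira types are not the generic ones — but running the same comparison through Tate's algorithm gives the identical dichotomy.

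The prime $p=2$ is the real work, and I expect it to be the main obstacle. Two things make it delicate. First, the minimality test at $2$ is the intricate one: whether $\Emind$ (or a rescaling of it) underlies an \emph{integral} Weierstrass model over $\Z_2$ is governed by Kraus-type congruences for $c_4$ and $c_6$ modulo $4$ and higher powers of $2$. Second — and this is why some entries are $u_2(\Emind)=2^{-1}$ rather than a power of $2$ that is $\ge1$ — the invariants of $\Emind$ themselves need not come from any integral model at $2$, so one may first have to scale \emph{up} by $u=1/2$ to reach an integral model, which then turns out to be minimal. The plan is to treat $d\equiv1\Mod4$ on its own (the local twist at $2$ is then unramified or trivial, the minimal model is unchanged, and $u_2(\Emind)=1$), and for $d\equiv2\Mod4$ and $d\equiv3\Mod4$ (ramified twists of conductor $2^3$ and $2^2$) to run through the finite list of $2$-signatures $\operatorname{sig}_2(\Emin)$ that a minimal model can have, apply the shift coming from the twist, and then alternately apply Kraus's existence criterion and the reduction step of Tate's algorithm until a minimal model of $E^d$ is reached, recording the accumulated scaling $u_2(\Emind)$. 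The congruences appearing in the table — such as $c_6(\Emin)2^{-10}d\equiv-1\Mod4$ or $c_6(\Emin)2^{-7}d\equiv-1\Mod4$ — are precisely the ones that decide, at each stage, whether the relevant Kraus congruence holds. The computation itself is elementary but long; the genuine difficulty is organizing the $p=2$ casework correctly, in particular handling the atypical minimal models such as $\operatorname{sig}_2(\Emin)=(6,9,\ge18)$, which is minimal only because a Kraus congruence fails and which therefore behaves differently under the twist from the generic case.
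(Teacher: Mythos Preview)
The paper does not supply its own proof of this proposition: it is stated with attribution to Pal~\cite{Pal} and then used as a black box throughout (the subsequent Tables~\ref{PapaNo23}--\ref{Papa2} merely repackage the output of the proposition together with Papadopoulos's and Browkin--Davies's minimality tables). So there is no in-paper argument to compare your proposal against.

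That said, your outline is correct and is essentially the route taken in Pal's original article: the complex-analytic identification $\Lambda(\Emind)=d^{-1/2}\Lambda(\Emin)$ via the $\Q(\sqrt d)$-isomorphism, reduction to the local scaling factors $u_p(\Emind)$, the clean dichotomy for odd $p\mid d$ coming from the $(2,3,6)$ shift in $p$-signature versus the starred/unstarred Kodaira types, and the lengthy $p=2$ case-analysis driven by Kraus's integrality criterion. Your diagnosis of why $u_2=2^{-1}$ can occur (the twisted invariants $(d^2c_4,d^3c_6)$ need not themselves satisfy the Kraus congruences, so one must first scale \emph{up}) and of why certain minimal $2$-signatures such as $(6,9,\ge18)$ behave anomalously is exactly right. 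The only thing missing from a full proof is the execution of that $p=2$ table, which is mechanical but, as you say, long.
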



\noindent {\bf Minimal models and Pal values.}  
Given an elliptic curve $E/\Q$ defined by a Weierstrass equation and a prime $p$, 
Tate's algorithm \cite{Tate} produces a Weierstrass change $u,r,s,t\in \Q$, $u\ne 0$, giving rise to a $p$-minimal equation of $E$: that is, a Weierstrass equation with integer coefficients and 
discriminant with minimal $p$-valuation.
The algorithm describes as well the special fibers of the N\'eron model of $E$ at $p$ tagged by the Kodaira symbols $\Kd_p(E)$: $\kI_0$, $\kI_n$, $\kII$, $\kIII$, $\kIV$, $\kI_0^*$, $\kI_n^*$, $\kIV^*$, $\kIII^*$, $\kII^*$. Papadopulus~\cite{Papa} employs Tate's algorithm to examine how to determine a $p$-minimal model of $E$ and $\Kd_p(E)$  directly from the $p$-adic signature $\operatorname{sig}_p(E)$. For \( p \geq 5 \) this can be achieved, but for \( p = 2 \) and \( p = 3 \) one needs the analysis in \cite{Papa} plus the refinement given by Browkin and Davies \cite{poland} to provide the extra conditions that permit distinguishing between different $\Kd_p(E)$ that may arise from the same $p$-signature. 

For future use, in the next tables we compile the information in ~\cite{Papa} and \cite{poland} along with Proposition~\ref{pal} that allows us to determine $p$-minimal models $\Emin$ of $E$ and for the $d$-quadratic twist
$\Emind$. More precisely, Tables \ref{PapaNo23}, \ref{Papa3}, and \ref{Papa2} provide the Kodaira symbols \( \Kd_p(E) \) at \( p \), as well as the Pal values \( u_p(\mathcal{E}^d) \) for square-free integers \( d \). Whenever a row displays the expression $\oplus\, (u=p)$,
it means that \( E \) is not \( p \)-minimal and one needs to transform its Weierstrass equation with a change given by \( u = p \) and then to
restart searching in the table with the new \( p \)-adic signature.

Table \ref{Papa3} (for $p=3$) and Table \ref{Papa2} (for $p=2$) include boldfaced labels to
indicate that an extra condition must be satisfied. 
We use the logical negation symbol \( \lnot \) to mark  the opposite condition, and the logical conjunction symbol \( \land \) to combine conditions.

\


\noindent \fbox{Case $p\ne 2,3$}
In these cases, the $p$-signature $\operatorname{sig}_p(E)$ determines the Kodaira symbol $\Kd_p(E)$, a $p$-minimal model $\Emin$ of $E$,  and  a $p$-minimal model of $\Emind$ as well.
We refer to  \cite[\S I, Tableau~I]{Papa}.
\newpage
\begin{longtblr}
[
label = {PapaNo23},
caption = {Cases $p\ne 2,3$}
]
{cells = {mode=imath},hlines,vlines,measure=vbox,colspec  = cccc}
  p &\text{Kodaira}   & \SetCell[c=2]{c} u_p(\Emind)\\
\operatorname{sig}_{p}(E)  &\Kd_{p}(E)  & d \equiv 0\,(p) & d\not \equiv 0\,(p)  \\
(0,\ge 0,0) & \kI_0 & 1& 1\\
(\ge 0,0,0) & \kI_0 & 1& 1\\
(\ge 0,0,n) & \kI_n & 1& 1\\
(\ge 1,1,2) & \kII & 1& 1\\
(1,\ge 2,3) & \kIII & 1& 1\\
(\ge 2,2,4) & \kIV & 1& 1\\
(2,\ge 3,6) & \kI_0^* & p& 1\\
(\ge 2, 3,6) & \kI_0^* & p& 1\\
(2, 3,6+n) & \kI_n^* & p& 1\\
(\ge 3,4,8) & \kIV^* & p& 1\\
(3,\ge 5,9) & \kIII^* & p& 1\\
(\ge 4,5,10) & \kII^* & p& 1\\
(\ge 4,\ge 6,\ge 12) &\SetCell[c=3]{c}   \oplus\, (u=p)   \\
\end{longtblr}

\noindent \fbox{Case $p=3$}
It turns out that the same signature $\operatorname{sig}_3(E)$ can occur in several Kodaira symbols. This ambiguity is solved in \cite[\S II.2, Tableau II]{Papa}  by using the following conditions:

$$
\begin{array}{cl}
\cond{3_a}:&  
(c_6/3^3)^2+2-3(c_4/3^2) \equiv 0\!\pmod{9}.\\[4pt]
\cond{3_b}:&  (c_6/3^6)^2+2-3(c_4/3^4) \equiv 0\!\pmod{9}.\\[4pt]
\end{array}
$$

\noindent Observe that conditions $\cond{3_a}$ and
$\cond{3_b}$
correspond to $P_2$ and $P_5$, respectively, in \cite[\S II.2]{Papa}.

\begin{longtblr}
[
label = {Papa3},
caption = {Case $p=3$}
]
{cells = {mode=imath},hlines,vlines,measure=vbox,colspec  = ccccc,rowhead = 2}
  p=3 & \SetCell[r=1, c=2]{c} \text{Kodaira}  & & \SetCell[c=2]{c} u_3(\Emind)\\
\operatorname{sig}_{3}(E)  &\Kd_{3}(E) & \text{condition}  & d \equiv 0\,(3) & d\not \equiv 0\,(3)  \\
(0,0,n) & \kI_n &   & 1& 1\\
(1,\ge 3,0) & \kI_0 &   & 1& 1\\
\SetCell[r=2]{c}  (\ge 2,3,3) & \kIII & \cond{3_a} & \SetCell[r=2]{c}  1& \SetCell[r=2]{c}  1\\
                                     & \kII &  \neg\,\,\cond{3_a} & &  \\
(2,3,4) & \kII &   & 1& 1\\
(2,3,5) & \kIV &   & 1& 1\\
(2,3,6+n) & \kI_n^* &   & 3& 1\\
(2,4,3) & \kII &   & 1& 1\\
(2,\ge 5,3) & \kIII &   & 1& 1\\
(\ge 3,4,5) & \kII &   & 1& 1\\
(3,5,6) & \kIV &   & 1& 1\\
(3,\ge6,6) & \kI_0^* &   & 3& 1\\
(\ge 4,5,7) & \kIV &   & 1& 1\\
\SetCell[r=2]{c}  (\ge 4,6,9) & \kIII^*& \cond{3_b} & \SetCell[r=2]{c}  3& \SetCell[r=2]{c}  1\\
                                     & \kIV^* &  \neg\,\,\cond{3_b} & &  \\
  (4,6,10) &  \kIV^* & & 3& 1\\
  (4,6,11) &  \kII^* & & 3& 1\\
  (4,6,12+n) &  \kI_n & \oplus\, (u=3)  & 1& 1\\
  (4,7,9) &  \kIV^* & & 3& 1\\
  (4,\ge 8,9) &  \kIII^* &  & 3& 1\\
  (\ge 5,7,11) &  \kIV^* &  & 3& 1\\
  (5,8,12) &  \kII^* &  & 3& 1\\
  (5,\ge 9,12) &  \kI_0 & \oplus\, (u=3)  & 1& 1\\
  (\ge 6,8,13) & \kII^* &   & 3& 1\\
(\ge 6,\ge 9,\ge 15) &\SetCell[c=4]{c}    \oplus\, (u=3)   \\
\end{longtblr}

\vskip 1truecm 

\noindent \fbox{Case $p=2$} In this case, more conditions are required to determine \( K_2(E) \) and the $p$-minimal models. 
The ambiguities are solved in \cite{Papa} and \cite{poland} by using the following conditions.  Set $A=-c_4/48$ and $B=-c_6/864$, and 
define the polynomials

$$
\begin{array}{l}
\Psi_2(x)=x^3+Ax+B\,,\\[8pt]
\Psi_3(x)=3 x^4+6Ax^2+12Bx-A^2\,.
\end{array}
$$

\noindent The conditions are:
\begin{itemize}
\item[]$\cond{2_a}$:  
$\Bigl(
A\equiv 1\!\pmod{4}
\text{ and } B\equiv 0,1\!\pmod{4}
\Bigr)$ 
or
$
\Bigl(
A\not\equiv 1\!\pmod{4} 
\text{ and }
B\equiv 2,3\!\pmod 4
\Bigr)$. 
\item[] $\cond{2_b}$: $\Psi_3(A)\not\equiv 0\!\pmod{8}$.
\item[]$\cond{2_c}$: Either $\Psi_3(x)\equiv 0\!\pmod{32}$ has no solution or 
if
$\Psi_3(r)\equiv 0\!\pmod{32}$ then 
$\Psi_2(r)\equiv 1,8,9,12\!\pmod{16}$. 
\item[]$\cond{2_d}$:  If $\Psi_3(r)\equiv 0\!\pmod{32}$, then $r\equiv 1,2\!\pmod{4}$.
\item[] $\cond{2_e}$: $c_4/2^6\equiv 3\!\pmod{4}.$
\item[] $\cond{2_f}$: $c_6/2^6\equiv 1\!\pmod{4}.$
\item[] $\cond{2_g}$: $c_6/2^9\equiv 3\!\pmod{4}$.
\end{itemize}
Observe that conditions $\cond{2_a}$, $\cond{2_b}$, $\cond{2_c}$ and $\cond{2_d}$ arise from Propositions 1--7 of \cite[\S II.3]{Papa}. Conditions $\cond{2_e}$, $\cond{2_f}$, and $\cond{2_g}$ do not appear in \cite{Papa}.
However, condition $\cond{2_e}$ is required when \( \operatorname{sig}_2(E) = (6, \ge 9, 12) \) (cf. \cite[Lemma 2.9]{poland}). Also, conditions $\cond{2_f}$ and $\cond{2_g}$ are necessary to determine whether the cases \( \operatorname{sig}_2(E) = (4, 6, \geq 12) \) and \( \operatorname{sig}_2(E) = (\ge 8, 9, 12) \) are 2-minimal (cf. \cite[Lemma 2.2]{poland}), and in such cases, to 
determine the Kodaira symbol \( K_2(E) \) and to get a $2$-minimal model.

\begin{longtblr}
[label={Papa2},caption = {Case $p=2$}]
{cells = {mode=imath},hlines,vlines,measure=vbox,colspec  = cccccc,hspan=even, rowhead = 2}
  p=2 & \SetCell[r=1, c=2]{c} \text{Kodaira}  & & \SetCell[c=3]{c} u_2(\Emind)\\
\operatorname{sig}_{2}(E)  &\Kd_{2}(E) & \text{condition}  & d \equiv 1\,(4) & d\equiv 2\,(4) & d\equiv 3\,(4)  \\
(0,0,n) & \kI_n &   & 1& 2^{-1}& 2^{-1}\\
(\ge 4,3,0) & \kI_0 & & 1& 1 & 2^{-1}\\
\SetCell[r=3]{c}  (4,5,4) & \kII & \cond{2_a} & \SetCell[r=3]{c}  1& \SetCell[r=3]{c} 1  & \SetCell[r=3]{c}   1\\
                                     & \kIII &  \neg\,\,\cond{2_a}\, \land\, \cond{2_b} & & & \\
                                     & \kIV &  \neg\,\,\cond{2_a} \land \neg\,\,\cond{2_b} & & & \\
\SetCell[r=2]{c}  (4,\ge 6,6) & \kII & \cond{2_a} & \SetCell[r=2]{c} 1& \SetCell[r=2]{c} 1& \SetCell[r=2]{c} 1\\
                                     & \kIII &  \neg\,\,\cond{2_a}  & & & \\
(4,6,7) & \kII &   & 1& 1& 1\\
\SetCell[r=3]{c}  (4,6,8) & \kI_0^* & \cond{2_c} & \SetCell[r=3]{c} 1& \SetCell[r=3]{c} 1& \SetCell[r=3]{c} 1\\
                                     & \kI_1^*&  \neg\,\,\cond{2_c}\, \land\, \cond{2_d} & & & \\
                                     & \kIV^* &  \neg\,\,\cond{2_c} \land \neg\,\,\cond{2_d} & & & \\
(4,6,9) & \kI_0^*&   & 1& 1 & 1\\
\SetCell[r=2]{c}  (4, 6,10) & \kI_2^* & \cond{2_d} & \SetCell[r=2]{c} 1& \SetCell[r=2]{c} 1& \SetCell[r=2]{c} 1 \\
                                     & \kIII^* &  \neg\,\,\cond{2_d}  & & & \\
\SetCell[r=2]{c}  (4, 6,11) & \kI_3^* & \cond{2_d} & \SetCell[r=2]{c} 1& \SetCell[r=2]{c} 1& \SetCell[r=2]{c} 1\\
                                     & \kII^* &  \neg\,\,\cond{2_d}  & & & \\
\SetCell[r=2]{c}  (4, 6,12+n) & \kI_{n+4}^* & \cond{2_f} & 1& 1& 2 \\
                                     & \kI_n &  \neg\,\,\cond{2_f} \oplus\, (u=2)  & 1& 2^{-1} &2^{-1} \\
\SetCell[r=2]{c}  (5,5,4) & \kII & \cond{2_a} & 1& \SetCell[r=2]{c} 1&\SetCell[r=2]{c} 1 \\
                                     & \kIII &  \neg\,\,\cond{2_a}  & 1& & \\
(5,6,6) & \kII &   & 1& 1& 1\\
(\ge 6,6,6) & \kII &   & 1& \text{$1^*$ or $2^*$} & 1\\
(5,7,8) & \kIII &   & 1& 1& 1\\
(5,\ge 8,9) & \kIII &   & 1& 1& 1\\
\SetCell[r=2]{c}  (\ge 6,5,4) & \kII & \cond{2_a} & \SetCell[r=2]{c} 1&\SetCell[r=2]{c}  1&\SetCell[r=2]{c} 1 \\
                                     & \kIV &  \neg\,\,\cond{2_a}  & & & \\
\SetCell[r=2]{c}  (6,7,8) & \kI_0^*& \cond{2_c} & \SetCell[r=2]{c} 1& \SetCell[r=2]{c} 1&\SetCell[r=2]{c} 1 \\
                                     & \kI_1^*&  \neg\,\,\cond{2_c} & & & \\
(\ge 6,8,10) & \kI_0^*&   & 1& 2& 1\\
(6,9,13) & \kI_2^*&   & 1& 2&1 \\
 (6, 9,14+n), n<4 & \kI_{4+n}^* &   & 1& 2 & 1\\
(6, 9,18+n) & \kI_{n+8}^* &   & 1& \text{$4 ^*$ or $2^*$}  & 1\\
\SetCell[r=2]{c}  (6,\ge 9,12) & \kI_2^* & \cond{2_e} & \SetCell[r=2]{c} 1& \SetCell[r=2]{c}  2& \SetCell[r=2]{c}  1 \\
                                     & \kI_3^* & \neg \cond{2_e}  & & & \\
\SetCell[r=2]{c}  (\ge 7,7,8) & \kI_0^* & \cond{2_c} & \SetCell[r=2]{c} 1& \SetCell[r=2]{c} 1&\SetCell[r=2]{c} 1 \\
                                     & \kIV^* &  \neg\,\,\cond{2_c}  & & & \\
(7,9,12) & \kIII^*&   & 1& 2& 1\\
(7,10,14) & \kIII^*&   & 1& 2& 1\\
(7,\ge 11,15) & \kIII^*&   & 1& 2& 1\\
\SetCell[r=2]{c}  (\ge 8, 9,12) & \kII^* & \cond{2_g} & 1& 2& 2\\
                                     & \kI_0 &  \neg\,\,\cond{2_g} \oplus\, (u=2)  & 1& 2^{-1} & 2^{-1}\\
(\ge 8,10,14) & \kII^*&   & 1& 2& 1\\
(\ge 8,\ge 11,\ge 16) &\SetCell[c=5]{c}   \oplus\, (u=2)  \\
\end{longtblr}

\noindent Specifications for the fifth column in the above table:
\begin{itemize}
\item {\bf $1^*$ or $2^*$:} In the case $\operatorname{sig}_{2}(E)= (\ge 6,6,6)$, the value $u_2({\mathcal E}^d)$ is given by
 $$
 u_2({\mathcal E}^d)=\left\{\begin{array}{l}
  1 \,\,\text{if $c_6/2^6\not\equiv d/2\!\pmod{4}$},\\
  2 \,\,\text{if $c_6/2^6 \equiv d/2\!\pmod{4}$}.
 \end {array}\right.
 $$
\item {\bf $4^*$ or $2^*$:}  In the case $\operatorname{sig}_{2}(E)= (6, 9,18+n)$, the value $u_2({\mathcal E}^d)$ is given by
 $$
 u_2({\mathcal E}^d)=\left\{\begin{array}{l}
  4 \,\,\text{if $c_6/2^9\not\equiv d/2\!\pmod{4}$},\\
  2 \,\,\text{if $c_6/2^9 \equiv d/2\!\pmod{4}$}.
 \end {array}\right.
 $$
\end{itemize}

\vskip 0.3truecm 

\noindent {\bf Faltings elliptic curves.} 
We shall make use of the Faltings height of any elliptic curve $E$ defined over $\Q$, given by:
$$\
h(E) = -\frac{1}{2} \log (
\operatorname{vol}(\Lambda(\Emin)))
$$
where again $\Lambda(\Emin)$ denotes the N\'eron lattice attached to $E$. Stevens shows in \cite[Theorem 2.3]{S}  that within each $\mathbb{Q}$-isogeny class, there exists a unique elliptic curve with minimal Faltings height. Recall that we refer to this curve as the Faltings elliptic curve in the $\Q$-isogeny class
$\operatorname{Isog}(E)$.

\section{Strategy} 
\label{strategy}
As already mentioned before, our aim is to discuss which vertex is the Faltings elliptic curve in every twisted 
$\Q$-isogeny graph. In what follows, we describe our strategy, which depends on the genus of the corresponding modular curve. 

\begin{longtblr}
[
label = {types0},
caption = {Types of $\Q$-isogeny graphs: $X_0(N)$ of genus $0$}
]
{cells = {mode=imath},hlines,vlines,measure=vbox,colspec  = cccccccccccccc}
N & 2 & 3 & 4 & 5 & 6 & 7 & 8 & 9 & 10 & 12 & 13 & 16 & 25 \\
\text{type} & L_2(2) & L_2(3) & T_4 & L_2(5) & R_4(6) & L_2(7) & T_6 & L_3(9) & R_4(10) & S_8 & L_2(13) & T_8 & L_3(25)
\end{longtblr}

\begin{longtblr}
[
label = {typesgt0},
caption = {Types of $\Q$-isogeny graphs: $X_0(N)$ of genus $>0$}
]
{cells = {mode=imath},hlines,vlines,measure=vbox,colspec  = cccccccccccccc}
N & 11 & 14 & 15& 19 & 21 & 27 & 37 & 43 & 67 & 163    \\
\text{type} & L_2(11) & R_4(14) & R_4(15) & L_2(19)  &  R_4(21) & L_4 &  L_2(37) &  L_2(43) &  L_2(67) &  L_2(163)   
\end{longtblr}

\vskip 0.3 truecm

\noindent {\bf Cases $X_0(N)$ have genus 0.} The function field 
$\Q(X_0(N))=\Q(t(\tau))$ is generated by a modular function $t(\tau)$ (hauptmodul).
We refer to \cite[Table 2]{alvaro} for the choice of such a modular function~$t(\tau)$.
Each non-cuspidal rational value $t=t(\tau)\in \Q$
gives rise to a $\Q$-isogeny graph (and its twisted graphs) of type associated with $X_0(N)$.

\vskip 0.2truecm

{\it Step 1:} We compute the $j$-invariants and set the signatures $\operatorname{sig}(E)=(c_4,c_6,\Delta)$ for every vertex~$E$ in the $\Q$-isogeny graph of
$\operatorname{Isog}
=\operatorname{Isog}(t)$
corresponding to the non-cuspidal rational value~$t$.
The elliptic curves $E=E(c_4,c_6,\Delta)$ are computed using Velú's formulas in such a way that the isogenies in the resulting graph are normalized. See ~\cite{githubrepo} for the precise data (also \cite{Barrios} and the references therein for similar
parametrizations).

\vskip 0.2truecm

{\it Step 2:} Depending on the $p$-adic valuations of the rational value $t$, for each vertex $E=E(c_4,c_6,\Delta)$ in the $\Q$-isogeny graph, we compute the value 
$u =u(E) = \prod_p u_p(E)$ to apply a Weierstrass transformation giving rise to the signature of a global minimal model $\mathcal E=\mathcal E(u^{-4}c_4,u^{-6}c_6,u^{-12}\Delta)$ of each elliptic curve $E$. This is accomplished through Tate's algorithm as described by Tables
\ref{PapaNo23}, \ref{Papa3}, and \ref{Papa2}  in  Section \ref{terminology}.
From this, we save the (projective) vector 
$${\bf u}=
[u(E)\colon E\in \operatorname{Isog}].$$

\vskip 0.2truecm

{\it Step 3:} Depending on the square-free integer $d$, we compute the Pal value
$u_d=u({\mathcal E}^d)=\prod_p u_p({\mathcal E}^d)$
to apply a Weierstrass transformation giving rise to a global minimal model of the quadratic twist~${\mathcal E}^d$ over $\Q(\sqrt{d})$. Again, this is accomplished by using  the Tables
\ref{PapaNo23}, \ref{Papa3}, and \ref{Papa2}  in  Section \ref{terminology}. At this point, the signature of a minimal model of ${\mathcal E}^d$ is
$
(u^{-4} u_d^{-4} d^2 c_4,u^{-6} u_d^{-6} d^3 c_6,u^{-12}u_d^{-12} d^6\Delta)$.
From this, we save the (projective) vector 
$${\bf u}_d=
[u(\Emind)\colon E\in \operatorname{Isog}].$$

{\it Step 4:} Finally, we can employ the above projective vectors
${\bf u}$, ${\bf u}_d$
and apply Lemma \ref{normal} to compare the Faltings heights for the
different vertices $E^d$ in the twisted $\Q$-isogeny graph. This allows us to decide which of them corresponds to the Faltings elliptic curve in $\operatorname{Isog}(E^d)$. Notice that the volumes of the corresponding Néron lattices of the quadratic twists $E^d$ satisfy:
$$
\operatorname{vol}
\left(
\frac{u(\Emind)}{\sqrt{d}} u(E) \Lambda_E\right)=
\frac{u(\Emind)^2}{|d|} u(E)^2 
\operatorname{vol}(\Lambda_E)\,.
$$

\vskip 0.2truecm

\noindent See Section \ref{examples} for the case associated with the modular curve $X_0(9)$ (graphs of type $L_3(9)$).

\vskip 0.3truecm

\noindent {\bf Cases $X_0(N)$ has genus $\geq 1$.}
Under the genus assumption, it turns out that the non-cuspidal points in the union of all $X_0(N)(\Q)$ are in finite number. We refer to \cite[Table 4]{alvaro} for the  $j$-invariants of the elliptic curves attached to all these points.
For every non-cuspidal rational point $P=(j(\tau),j(N\tau))$ in $X_0(N)(\Q)$ we chose minimal Weierstrass models for the vertices in the corresponding $\Q$-isogeny graph and detect the Faltings elliptic curve on it (for instance, using \cite{sage}). Then, for every square free integer $d$, we use again Pal's values to have control of the Faltings elliptic curve in the twisted isogeny graph over $\Q(\sqrt{d})$. 

\vskip 0.2truecm

\noindent See Section \ref{examples} for the case
associated with the modular curve  $X_0(11)$ (graphs of type $L_2(11))$.

\section{Examples}
\label{examples}
\noindent {\bf Case $L_3(9)$.}
The isogeny graphs of type $L_3(9)$ are given by
three isogenous elliptic curves:
\[ 
\begin{tikzcd}
E_1 \arrow[dash]{r}{3} & E_3  \arrow[dash]{r}{3} & E_9   \,.
\end{tikzcd}
\]
The non-cuspidal rational points of the modular curve $X_0(9)$ parametrize the isogeny graphs of type~$L_3(9)$. The modular curve $X_0(9)$ has genus $0$ and a hauptmodul for this curve is:
$$t(\tau)= 3^3 \left( \frac{\eta(9\tau)}{\eta(\tau)}\right)^3\,.$$ 
Letting $t=t(\tau)$, one can write
$$
\begin{tblr}{l@{\,=\,}l}
j(E_1) = j(\tau) & 
\displaystyle{\frac{(t + 3)^{3}(t^{3} + 9 \, t^{2} + 27 \, t + 3)^{3}}{t(t^{2} + 9 \, t + 27)}}\\[8pt]
j(E_3) = j(3\tau) & 
\displaystyle{\frac{(t + 3)^{3}(t + 9)^{3}}{t^3(t^{2} + 9\,  t + 27)^{3} }}
\\[8pt]
j(E_9) = j(9\tau) & 
\displaystyle{\frac{ (t + 9)^{3}(t^{3} + 243 \, t^{2} + 2187\,  t + 6561)^{3}}{t^9(t^{2} + 9\,  t + 27) }}\,.\\[8pt]
\end{tblr}
$$
By using Velú formulas, we can choose Weierstrass equations for $(E_1,E_3,E_{9})$ in such a way that the isogenies in the graph are normalized. Their signatures are given by:

\[
\begin{tblr}{|c|l|}
\hline \SetCell[c=2]{c} L_3(9) \\ \hline
c_4(E_1) & 
(t + 3)  (t^{3} + 9\,  t^{2} + 27\,  t + 3)\\
c_6(E_1) & 
t^{6} + 18\,  t^{5} + 135\,  t^{4} + 504 \, t^{3} + 891\,  t^{2} + 486\,  t - 27\\  
\Delta(E_1) & 
t  (t^{2} + 9\,  t + 27)\\
\hline
c_4(E_3) & 
(t + 3)  (t + 9) (t^{2} + 27) \\
c_6(E_3) & 
(t^{2} - 27) (t^{4} + 18\,  t^{3} + 162\,  t^{2} + 486\,  t + 729) \\  
\Delta(E_3) & 
t^{3}  (t^{2} + 9 \, t + 27)^{3} \\
\hline
c_4(E_9) & 
(t + 9)  (t^{3} + 243\,  t^{2} + 2187\,  t + 6561) \\
c_6(E_9) & 
t^{6} - 486\,  t^{5} - 24057\,  t^{4} - 367416\,  t^{3} - 2657205\,  t^{2} - 9565938\,  t - 14348907\\  
\Delta(E_9) & 
 t^{9}  (t^{2} + 9\,  t + 27)\\
\hline
\end{tblr}
\]

\vskip 0.3truecm

We remark that the subgroup of $\operatorname{Aut} X_0(9)$ that fixes the set of vertices of the graph is
generated by the Fricke involution of $X_0(9)$, given by $W_9(t)= 3^3/t$. The involution $W_9$ acts on the
isogeny graphs of type $L_3(9)$ as:
$
\begin{tikzcd}
W_9(E_1 \arrow[dash]{r}{3} & E_3\arrow[dash]{r}{3} & E_9   )=E_9^{-3} \arrow[dash]{r}{3} & E_3^{-3}  \arrow[dash]{r}{3} & E_1^{-3}  \,.
\end{tikzcd}
$
This information can be used to derive symmetries in the tables displayed below. 
With regard to Kodaira symbols, minimal models, and Pal values, using the above terminology, we get:

\begin{longtblr}
[label={L39_pNo23}, caption= $L_3(9)$ data for $p\neq 2${,} $3$]
{cells={mode=imath},hlines,vlines,measure=vbox,
colspec=cclclccc}
L_3(9) & \SetCell[c=5]{c} p\ne 2,3 & & & & \\
t & E & \SetCell[c=1]{c} \operatorname{sig}_p(\Emin) & u_p(E) & \Kd_p(E) & u_p(\Emind) \\
\SetCell[r=3]{c} v_p(t)=m> 0 
& E_1 & (0,0,m) & 1 & \kI_m & 1\\
& E_3 & (0,0,3m) & 1 & \kI_{3m}& 1 \\
& E_9 & (0,0,9m) & 1 & \kI_{9m} & 1\\
\SetCell[r=3]{c} 
\begin{array}{c}
v_p(t)=0  \\[6pt]
m=v_p(t^2+9\,t+27) \geq 0 
\end{array}
& E_1 & (0,0,m) & 1 & \kI_{m} & 1 \\
& E_3 & (0,0,3m) & 1 & \kI_{3m} & 1 \\
& E_9 & (0,0,m) & 1 & \kI_{m} & 1 \\
\SetCell[r=3]{c} v_p(t)=-m < 0 
& E_1 & (0,0,9m) & p^{-m} & \kI_{9m} & 1  \\
& E_3 & (0,0,3m) & p^{-m} & \kI_{3m} & 1 \\
& E_9 & (0,0,m) & p^{-m} & \kI_{m} & 1 \\
\end{longtblr}

\begin{longtblr}
[label={L39_p3},caption= {$L_3(9)$ data for $p$=3}]
{cells={mode=imath},hlines,vlines,measure=vbox,
hline{Z}={1-X}{0pt},
vline{1}={Y-Z}{0pt},
colspec=cclclcc,rowhead = 2}
L_3(9) &\SetCell[c=6]{c} p=3  & & & & & \\
t & E & \SetCell[c=1]{c} \operatorname{sig}_3(\Emin) & u_3(E) & \Kd_3(E) & \SetCell[c=2]{c} u_3(\Emind)  & \\
\SetCell[r=3]{c} v_3(t)=m\ge 3 
& E_1 & (2,3,m+3) & 1 & {\kI}^*_{m-3} & 3 & 1\\
& E_3 & (2,3,3m-3) & 3 & {\kI}^*_{3(m-3)} & 3 & 1 \\
& E_9 & (2,3,9m-21) & 3^{2} & {\kI}^*_{9(m-3)} & 3 & 1 \\
\SetCell[r=3]{c} 
     v_3(t)=2   
& E_1 & (2,3,5) & 1 & \kIV  & 1 & 1\\
& E_3 & (\ge 2,3,3) & 3 & \kII  & 1 & 1\\
& E_9 & (\ge 4,6,9) & 3 & {\kIV}^* & 3 & 1 \\
\SetCell[r=3]{c} 
     v_3(t)=1  
& E_1 & (\ge 2,3,3) & 1 & \kII  & 1 & 1\\
& E_3 & (\ge 4,6,9) & 1 & {\kIV}^* & 3 & 1 \\
& E_9 & (4,6,11) & 1 & {\kII}^*  & 3 & 1\\
\SetCell[r=3]{c} 
    v_3(t)=-m\le 0   
& E_1 & (0,0,9m) & 3^{-m} & \kI_{9m}  & 1 & 1 \\
& E_3 & (0,0,3m) & 3^{-m} & \kI_{3m}  & 1 & 1 \\
& E_9 & (0,0,m) & 3^{-m} & \kI_{m}  & 1 & 1 \\
 \SetCell[c=5,r=2]{c} & & & & & d\equiv 0  & d\not\equiv 0 \\
                      & & & & & \SetCell[c=2]{c} d \Mod 3 & \\
\end{longtblr}



\begin{longtblr}
[label={L39_p2},caption = {$L_3(9)$ data for $p$=2}]
{cells = {mode=imath},hlines,vlines,measure=vbox,
hline{Z} = {1-5}{0pt},
vline{1} = {Y-Z}{0pt},
colspec  = cclclccc}
L_3(9) & \SetCell[c=7]{c} p=2  & & & & & \\ 
t & E & \SetCell[c=1]{c} \operatorname{sig}_2(\Emin) & u_2(E) & \SetCell[c=1]{c} \Kd_2(E) & \SetCell[c=3]{c} u_2(\Emind)  \\
\SetCell[r=3]{c} v_2(t)=m>0 
& E_1 & (4,6,m+12) & 2^{-1} & \kI_{m+4}^* & 1 & 1 & 2 \\
& E_3 & (4,6,3m+12) & 2^{-1} & \kI_{3m+4}^* & 1 & 1 & 2\\
& E_9 & (4,6,9m+12) & 2^{-1} & \kI_{9m+4}^*& 1 & 1 & 2\\
\SetCell[r=3]{c} v_2(t)=0 
& E_1 & (\geq 8,9,12) & 2^{-1} & \kII^* & 1 & 2 & 2 \\
& E_3 & (\geq 8,9,12) & 2^{-1} & \kII^* & 1 & 2 & 2\\
& E_9 & (\geq 8,9,12) & 2^{-1} & \kII^*& 1 & 2 & 2\\
\SetCell[r=3]{c} v_2(t)=-m<0 
& E_1 & (4,6,9m+12) & 2^{-m-1} & \kI_{9m+4}^* & 1 & 1 & 2\\
& E_3 & (4,6,3m+12) & 2^{-m-1}  & \kI_{3m+4}^*& 1 & 1 & 2\\
& E_9 & (4,6,m+12) & 2^{-m-1}  & \kI_{m+4}^*& 1 & 1 & 2 \\
 \SetCell[c=5,r=2]{c} & & & & &  d\equiv 1 &  d\equiv 2  & d\equiv 3 \\
                      & & & & & \SetCell[c=3]{c} d \Mod{4} & \\
\end{longtblr}

From the previous tables one gets the (projective) vectors 
${\bf u}=[u(E)]$ and ${\bf u}(d)=[u(\Emind)]$ as discussed in Section~\ref{strategy}:

\begin{longtblr}
[label={L39},caption = {$L_3(9)$ data}]{cells={mode=imath},hlines,vlines,measure=vbox, rowhead = 1}
\SetCell[c=1]{c} t &\SetCell[c=1]{c} [u(E)]  & \SetCell[c=1]{c} [u(\Emind)] & \SetCell[c=1]{c} d \\
\SetCell[r=1]{c} v_3(t)\le 0 & \SetCell[r=1]{c} (1:1:1) & (1:1:1) &\SetCell[c=1]{c}  \text{all} \\
\SetCell[r=2]{c} v_3(t)=1 & \SetCell[r=2]{c} (1:1:1) & (1:1:1) & d\not\equiv 0\,(3) \\
& & \SetCell[r=1]{c} (1:3:3) & d\equiv 0\,(3) \\
\SetCell[r=2]{c}  v_3(t)=2 & \SetCell[r=2]{c} (1:3:3) & (1:1:1) & d\not\equiv 0\,(3) \\
& & \SetCell[r=1]{c} (1:1:3) & d\equiv 0\,(3) \\
v_3(t)\ge 3 & (1:3:3^{2}) & (1:1:1) &\SetCell[c=1]{c}  \text{all} \\
\end{longtblr}

\vskip 0.2truecm
\noindent The contents of the above table are the ingredients to prove the following result:

\begin{proposition}
Let 
$ 
\!\!\begin{tikzcd}
E_1 \arrow[dash]{r}{3}  & E_3 \arrow[dash]{r}{3} & E_9 
\end{tikzcd}\!\!
$
be a $\mathbb{Q}$-isogeny graph of type $L_3(9)$ corresponding to a given $t$ in $\mathbb{Q}^*$ with signatures as above. For every square-free integer $d$, 
the Faltings curve (circled)
in the twisted isogeny graph 
$
\!\!\begin{tikzcd} 
E_1^d \arrow[dash]{r}{3}  & E_3^d\arrow[dash]{r}{3} & E_9^d 
\end{tikzcd}\!\!
$ 
is given by:

\[
\begin{tblr}{|c|c|c|c|c|}
\hline
 L_3(9) & \text{twisted isogeny graph} & d & \text{Prob} \\
\hline
 \SetCell[r=1]{c} v_3(t)\leq 0 & \circled[0.8]{$E_1^d$} \longrightarrow E_3^d\longrightarrow E_9^d  & \text{all}& 1 \\
\hline
\SetCell[r=2]{c} v_3(t)=1   
& \circled[0.8]{$E_1^d$} \longrightarrow E_3^d\longrightarrow E_9^d & d\not\equiv 0\,(3) & 3/4\\ 
&  E_1^d \longleftarrow \circled[0.8]{$E_3^d$} \longrightarrow E_9^d &d\equiv 0\,(3) &  1/4 \\
\hline
\SetCell[r=2]{c}
v_3(t)=2   
 &  E_1^d \longleftarrow \circled[0.8]{$E_3^d$} \longrightarrow E_9^d &d\not\equiv 0\,(3) & 3/4 \\
 &  E_1^d \longleftarrow E_3^d\longleftarrow \circled[0.8]{$E_9^d$} &d\equiv 0\,(3) & 1/4 \\
\hline
 \SetCell[r=1]{c} v_3(t)\ge 3   &  E_1^d \longleftarrow  E_3^d\longleftarrow  \circled[0.8]{$E_9^d$} & \text{all} & 1 \\
\hline
\end{tblr}
\]
\vskip 0.1truecm
\noindent The column Prob gives the probability of the circled curve to be the Faltings curve.
\end{proposition}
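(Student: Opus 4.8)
The plan is to reduce the statement to a comparison of Néron lattice volumes and then read the winner off Table~\ref{L39}. Since $h(\mathcal E)=-\frac12\log\operatorname{vol}(\Lambda(\mathcal E))$, the Faltings curve in the twisted graph is the vertex whose global minimal model has the \emph{largest} Néron lattice volume, so it suffices to compare those three volumes. First I would fix the Vélu models of $E_1,E_3,E_9$ that underlie the tables: the isogenies $E_1\to E_3\to E_9$ between them are normalized, so applying Lemma~\ref{normal} twice gives $\operatorname{vol}(\Lambda_{E_1})=3\operatorname{vol}(\Lambda_{E_3})=9\operatorname{vol}(\Lambda_{E_9})$, where $\Lambda_{E_i}$ is the period lattice of that model. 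A Weierstrass change with factor $u$ multiplies the period lattice volume by $u^2$, so passing to a global minimal model scales $\operatorname{vol}(\Lambda_{E_i})$ by $u(E_i)^2$, and Pal's result then scales it by $u(\mathcal E_i^d)^2/|d|$ for the twist by $\Q(\sqrt d)$; hence, up to a common positive factor,
$$
\operatorname{vol}(\Lambda(\mathcal E_1^d)):\operatorname{vol}(\Lambda(\mathcal E_3^d)):\operatorname{vol}(\Lambda(\mathcal E_9^d))=9\bigl(u(E_1)u(\mathcal E_1^d)\bigr)^2:3\bigl(u(E_3)u(\mathcal E_3^d)\bigr)^2:\bigl(u(E_9)u(\mathcal E_9^d)\bigr)^2.
$$

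Next I would note that only the projective vectors $[u(E)]$ and $[u(\mathcal E^d)]$ enter this comparison, and these are precisely the data recorded in Table~\ref{L39}; so the Faltings curve is the index $i\in\{1,3,9\}$ maximizing $w_i\,(u(E_i)u(\mathcal E_i^d))^2$ with weights $(w_1,w_3,w_9)=(9,3,1)$. Then I would run the four rows of Table~\ref{L39}. For $v_3(t)\le 0$ both vectors are $(1:1:1)$ for every $d$, the weighted quantities are $9:3:1$, and $E_1^d$ wins. For $v_3(t)=1$: if $3\nmid d$ the vectors are $(1:1:1)$, giving $9:3:1$ and winner $E_1^d$; if $3\mid d$ then $[u(\mathcal E^d)]=(1:3:3)$, the quantities become $9:27:9$, and $E_3^d$ wins. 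For $v_3(t)=2$: if $3\nmid d$ then $[u(E)]=(1:3:3)$ and $[u(\mathcal E^d)]=(1:1:1)$, giving $9:27:9$ and winner $E_3^d$; if $3\mid d$ then $[u(\mathcal E^d)]=(1:1:3)$, the products are $(1:3:9)$ and the quantities $9:27:81$, so $E_9^d$ wins. For $v_3(t)\ge 3$ one has $[u(E)]=(1:3:9)$ and $[u(\mathcal E^d)]=(1:1:1)$, again $9:27:81$ and winner $E_9^d$. In each case the maximum is attained at a single vertex, in accordance with the uniqueness of the Faltings curve \cite{S}, and this pins down the circled vertex in the statement. As a consistency check I would also use that the Fricke involution $W_9(t)=27/t$ interchanges the rows $v_3(t)\le 0\leftrightarrow v_3(t)\ge 3$ and $v_3(t)=1\leftrightarrow v_3(t)=2$ while reversing the chain.

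Finally I would compute the probabilities. When $v_3(t)\le 0$ or $v_3(t)\ge 3$ the winner is independent of $d$, so the probability is $1$. When $v_3(t)\in\{1,2\}$ the winner depends only on whether $3\mid d$, and Lemma~\ref{analyticNT} gives that the proportion of square-free integers divisible by $3$ is $\frac{1}{1+3}=\frac14$; hence the two outcomes occur with probabilities $\frac34$ and $\frac14$, matching the last column of the table.

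The argument above is a finite, essentially mechanical comparison once Table~\ref{L39} is in hand, so it is not where the difficulty lies. The genuine obstacle — which I would treat as the technical heart of this case, and which has already been packaged into Table~\ref{L39} — is computing the local factors $u_p(E)$ and $u_p(\mathcal E^d)$ correctly at the bad primes $p=2$ and $p=3$ (Tables~\ref{L39_p2} and~\ref{L39_p3}), where one must invoke the refinements of Tate's algorithm due to Papadopulus and to Browkin--Davies to resolve the ambiguities between Kodaira types sharing the same $p$-signature.
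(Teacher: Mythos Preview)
Your proposal is correct and follows essentially the same approach as the paper: reduce to a comparison of N\'eron lattice volumes via Lemma~\ref{normal} and Pal's formula, read off $[u(E)]$ and $[u(\mathcal E^d)]$ from Table~\ref{L39}, and conclude with Lemma~\ref{analyticNT} for the probabilities. The only difference is one of emphasis: the paper's proof spends most of its space illustrating how one row of Table~\ref{L39_p3} (the case $v_3(t)=2$) is actually derived---computing $\operatorname{sig}_3$ for each $E_i$ and checking the Papadopoulos conditions $\cond{3_a}$, $\cond{3_b}$---whereas you take Table~\ref{L39} as established input and instead spell out the volume comparison for all four rows; you correctly flag the local computation at $p=2,3$ as the real work.
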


\begin{proof}
We begin by showing how the data shown  in  Tables \ref{L39_p3}, \ref{L39_pNo23}, and \ref{L39_p2} is obtained. To illustrate the process, we consider the case where \( v_3(t) = 2 \) in Table \ref{L39_p3}. The remaining cases can be handled in a similar manner.  
Writing \( t = 3^2 a \) where  \( a=a_0+O(3) \in \mathbb{Z}_3 \) is a unit, 
we can express the signature 
\(\operatorname{sig}(E_1)\) as  
\[
\begin{array}{l}
c_4(E_1) = 3^2 (3 a+1) \left(3^5 a^3+3^5 a^2+3^4 a+1\right)\\[3pt]
c_6(E_1) = 3^3 \left(3^9 a^6+2\cdot 3^9 a^5+3^8\cdot 5 a^4+2^3\cdot 3^5 \cdot 7 a^3+3^5\cdot 11 a^2+2 \cdot 3^4 a-1\right)\\[3pt]
\Delta(E_1) = 3^5 a \left(3 a^2+3 a+1\right).
\end{array}
\]
We see that $\operatorname{sig}_3(E_1) = (2,3,5)$. By using Table \ref{Papa3} (Section \ref{terminology}), it follows that \( E_1 \) is minimal, i.e., \( u_3(E_1) = 1 \), and its Kodaira symbol is $\Kd_3(E_1) = \kIV$. Additionally, the Pal value in this case is $
u_3(\mathcal{E}_1^d) = 1$ for any $d$. Similarly, for the signature of the elliptic curve \( E_3 \), we obtain  
\[
\begin{array}{l}
c_4(E_3) = 3^6 (a+1) (3 a+1) \left(3 a^2+1\right)\\[3pt]
c_6(E_3) = 3^9 \left(3 a^2-1\right) \left(3^2 a^4+2\cdot 3^2 a^3+18 a^2+2\cdot 3a+1\right)\\[3pt]
\Delta(E_3) = 3^{15} a^3 \left(3 a^2+3 a+1\right)^3.
\end{array}
\]  
Hence  $\operatorname{sig}_3(E_3) = (\geq 6,9,15)$. Now Table \ref{Papa3} indicates that the Weierstrass model used for \( E_3 \) with this signature is not minimal. Performing a transformation with \( u_3(E_3) = 3 \) we obtain a Weierstrass model with $
\operatorname{sig}_3(E_3) = (\geq 2,3,3)$ which gives a $3$-minimal model. Moreover, to determine the Kodaira symbol we must check condition $\cond{3_a}$:  
\[
\left(\frac{c_6}{3^3}\right)^2+2-3\left(\frac{c_4}{3^2}\right) \equiv 3 a_0^2 \equiv 3 \!\pmod{9}.
\]  
Therefore, we conclude that $\Kd_3(E_3) = \kII$  and $u_3(\mathcal{E}_3^d) = 1$ for any square-free integer $d$.

Finally, for the elliptic curve \( E_9 \), we obtain  
\[
\begin{array}{l}
c_4(E_9) = 3^8 (a+1) \left(a^3+3^3 a^2+3^3 a+3^2\right) \\[3pt]
c_6(E_9) = 3^{12} \left(a^6 - 2\cdot 3^3 a^5 - 3^3\cdot 11 a^4 - 2^3\cdot 3^2\cdot 7 a^3 - 3^4\cdot 5 a^2 - 2\cdot 3^4 a - 3^3\right) \\[3pt]
\Delta(E_9) = 3^{21} a^9 \left(3 a^2+3 a+1\right).
\end{array}
\]  
Now, one has $\operatorname{sig}_3(E_9) = (\geq 8,12,21)$ so that we need to apply a transformation with \( u_3(E_9) = 3 \) to get a $3$-minimal model with $
\operatorname{sig}_3(E_9) = (\geq 4,6,9)$. In this case, to determine the Kodaira symbol, we must check whether condition 
$\cond{3_b}$ holds:  
\[
\left(\frac{c_6}{3^6}\right)^2+2-3\left(\frac{c_4}{3^4}\right)  \equiv a_0^{12}+6 a_0^4+6 a_0^3+2 \equiv \pm 3 \!\pmod{9}.
\]  
Hence, it follows  $\Kd_3(E_9) = {\kIV}^*$. Furthermore, we get $
u_3(\mathcal{E}_9^d) = 3$ if $d \equiv 0 \!\pmod{3}$ whereas $u_3(\mathcal{E}_9^d) = 1$ if $d \not\equiv 0 \!\pmod{3}$.

For primes $p\neq 3$, a similar discussion yields to the information stored in Tables  \ref{L39_p3}, \ref{L39_pNo23}, and \ref{L39_p2}. Indeed, for 
\( p \ne 3 \), it holds 
\[
(u_p(E_1) : u_p(E_3) : u_p(E_9)) = (u_p(\Emind_1) : u_p(\Emind_3) : u_p(\Emind_9)) = (1 : 1 : 1),
\]
and therefore we get the projective vectors
\[
[u(E)] = (u_3(E_1) : u_3(E_3) : u_3(E_9)) \quad\text{and}\quad
[u(\Emind)] = (u_3(\Emind_1) : u_3(\Emind_3) : u_3(\Emind_9)).
\]
Summarizing, for the case $v_3(t)=2$, we have $[u(E)] = (1:3:3)$, and $[u(\Emind)] = (1:1:1)$ if $d \not\equiv 0 \!\pmod{3}$ or $[u(\Emind)] = (1:1:3)$ if $d \equiv 0 \!\pmod{3}$.

Since the signatures are taken in such a way that the isogenies are normalized, we can write the period lattices of $(E_1,E_3,E_9)$ as:
$$
\displaystyle{
\Lambda_1 = \lambda \langle 1, \tau \rangle \,,\qquad
\Lambda_3 = \frac{1}{3}\lambda \langle 1, 3\tau \rangle \,,\qquad
\Lambda_9 = \frac{1}{9} \lambda \langle 1, 9\tau \rangle \,,}   
$$
for some $\lambda,\tau\in \mathbb{C}$ and $\operatorname{Im}(\tau)>0$. Letting $v=\operatorname{vol}(E_1)$, projectively we obtain
$$
(\operatorname{vol}(E_1):\operatorname{vol}(E_3):\operatorname{vol}(E_9))=\left(v:\frac{v}{3}:\frac{v}{9}\right)=\left(1:\frac{1}{3}:\frac{1}{9}\right).
$$
By using that \( [u(E)] = (1:3:3) \), the volumes of the Néron lattices of \( (E_1, E_3, E_9) \) satisfy
$$
(\operatorname{vol}(\Emin_1):\operatorname{vol}(\Emin_3):\operatorname{vol}(\Emin_9))=\left(v:3^2\frac{v}{3}:3^2\frac{v}{9}\right)=\left(1:3:1\right).
$$
Finally, let us prove the case $v_3(t)=2$ and $d\equiv 0 \!\pmod{3}$. Then $[u(\Emind)] = (1:1:3)$ and the volumes of the N\'eron lattices of the quadratic twists $(\Emin_1^d,\Emin_3^d,\Emin_9^d)$ satisfy:
$$
\left(v:3^2\frac{v}{3}:3^2 3^2\frac{v}{9}\right)=\left(1:3:9\right).
$$
Hence, in these cases we conclude that $E^d_9$ has minimum Faltings height among the elliptic curves
in the $\Q$-isogeny class. By Lemma~\ref{analyticNT}, 
it turns out that the quadratic twists of $E_9$ that become the Faltings elliptic 
curve in their $\Q$-isogeny class are in a proportion of
$1$ out of $4$.
\end{proof}

\vskip 0.3truecm

\noindent {\bf Case $L_2(11)$.}
The isogeny graphs of type $L_2(11)$ are given by two $11$-isogenous elliptic curves:
\[ 
\begin{tikzcd}
E_1 \arrow[dash]{r}{11} & E_{11}\,.
\end{tikzcd}
\]
The non-cuspidal rational points of the modular curve $X_0(11)$ parametrize the isogeny graphs of type~$L_2(11)$. The modular curve $X_0(11)$ is an elliptic curve of rank $0$ over the rationals. More precisely, we can choose the Weierstrass model $y^2 + y = x^3 - x^2 - 10\, x - 20$ for $X_0(11)$. 
The $j$-forgetful map $j\colon X_0(11) \to X_0(1)$ is given by
$$
j=
\frac{P_2(y)x^2+P_1(y)x+P_0(y)}
   {\left(x-16\right)}
$$
with
$$
\begin{array}{l}
P_2(y)=-11 y^4+641 y^3-452 y^2+11803 y-14372,\\
P_1(y)=-24 y^3+536 y^2+4540 y+6341\\
P_0(y)=-y^3+125 y^2-502 y+1776\\
\end{array}
$$
We refer to \cite{X011} for the above computation. One has
$$
X_0(11)(\Q)=
\{
(0 : 1 : 0), (5 : -6 : 1), (5 : 5 : 1), (16 : -61 : 1), (16 : 60 : 1)\}
$$
and hence:
$$
j((0 : 1 : 0))= \infty\,,\quad
j((16 : -61 : 1))= \infty\,,
$$
$$
j((5 : 5 : 1))= - 2^{15} \,,\quad
j((5 : -6 : 1))= -11^2 \,,\quad
j((16 : 60 : 1))= - 11 \cdot 131^3 \,.
$$
Besides the cusps
$(\infty)=(0:1:0)$, $(0)=(16:-61:1)$, we have a rational CM point $(5 : 5 : 1)$ that corresponds to $\tau_b=\frac{1}{2}+\frac{\sqrt{-11}}{2\cdot 11}\in\mathbb H$, and two non-cuspidal non-CM points 
$(16 : 60 : 1)$ and 
$(5 : -6 : 1)$ that correspond to
$\tau_a=0.5+0.09227...i$, and $\tau'_a=0.5+0.24630...i\in\mathbb H$. We have
$$
j(\tau_b)=j(11\tau_b)=-2^{15},\qquad j(\tau_a)=-11^{2},\qquad 
j(\tau'_a)=j(11\tau_a)=-11\cdot 131^{3}\,.
$$
We choose minimal Weierstrass equations and get two normalized $\Q$-isogeny graphs:
\vskip 0.5truecm
\begin{tblr}
{cells={mode=imath},hlines,vlines,measure=vbox,
colspec=clll}
E & \text{Minimal Weierstrass model} & j(E) &\text{LMFDB}\\
\hline
E_{1_a} & y^2+xy+y=x^3+x^2-30x-76 & -11\cdot 131^{3} & \eclabel{121.a2}\\
E_{11_a} & y^2+xy+y=x^3+x^2-305x+7888 & -11^{2} & \eclabel{121.a1}
\end{tblr}

\vskip 0.3truecm

\begin{tblr}
{cells={mode=imath},hlines,vlines,measure=vbox,
colspec=clll}
E & \text{Minimal Weierstrass model} & j(E) &\text{label}\\
\hline
E_{1_b} & y^2+y=x^3-x^2-7x+10 & -2^{15} & \eclabel{121.b2}\\
E_{11_b} & y^2+y=x^3-x^2-887x-10143 & -2^{15} & \eclabel{121.b1}\\
\end{tblr}

\vskip 0.5truecm
\noindent Their signatures are:
\vskip 0.5truecm

\begin{tblr}{cc}
\begin{tblr}
{cells={mode=imath},hlines,vlines,measure=vbox,
colspec=cll}
 E  & E_{1_a}& E_{11_a}\\
c_4(E) & 11\cdot 131 & 11^{4}\\
c_6(E) & 11\cdot 4973 & -11^{5}\cdot 43\\  
\Delta(E) & -11^{2} & -11^{10}\\
\end{tblr}
&
\begin{tblr}
{cells={mode=imath},hlines,vlines,measure=vbox,
colspec=cll}
 E  & E_{1_b} &  E_{11_b} \\
\hline 
c_4(E) & 2^{5}\cdot 11
 & 2^{5}\cdot 11^{3}\\
c_6(E) & -2^{3}\cdot 7\cdot 11^{2}
 & 2^{3}\cdot 7\cdot 11^{5}  \\  
\Delta(E) & -11^{3}
 & -11^{9} \\
\end{tblr}
\end{tblr}


\vskip 0.5truecm

\noindent With the help of \verb|SageMath| or \verb|Magma|, one checks  that the Faltings curve (circled)
in the graphs are
\[
\begin{tblr}{|c|c|}
\hline
 \circled[0.8]{$E_{1_a}$} \longrightarrow E_{11_a} \\
\hline
\end{tblr}
\qquad
\begin{tblr}{|c|c|}
\hline
 \circled[0.8]{$E_{1_b}$} \longrightarrow E_{11_b} \\
\hline
\end{tblr}
\]
Note that any $\mathbb Q$-isogeny class of elliptic curves of type $L_2(11)$ is obtained by quadratic twist
from these two graphs.
With regard to Kodaira symbols, minimal models, and Pal values, observe that the unique prime of bad reduction for the above elliptic curves is $p=11$. One has: 

\vskip 0.3truecm

\begin{longtblr}
[label={L211_p11},caption = {$L_2(11)$ data for $p$=11}]
{cells = {mode=imath},hlines,vlines,measure=vbox,
hline{Z} = {1-3}{0pt},
vline{1} = {Y-Z}{0pt},
colspec  = cllcc,rowhead = 2}
\SetCell[c=5]{c} p=11   & & &  \\ 
 E & \SetCell[c=1]{c} \operatorname{sig}_{11}(\Emin)  & \SetCell[c=1]{c} \Kd_{11}(E) & \SetCell[c=2]{c} u_{11}(\Emind)  \\
\hline
 E_{1_a} & (1,1,2)  & \kII & 1 & 1  \\
 E_{11_a} &(4,5,10)   & \kII^* & 11 & 1 \\
\hline
 E_{1_b} & (1,2,3)   & \kIII& 1 & 1\\
 E_{11_b} & (3,5,9)   & \kIII^* & 11 & 1  \\
 \SetCell[c=3,r=2]{c}   & & &  d\equiv 0 &  d\not\equiv 0  \\
                        & & & \SetCell[c=2]{c} d \Mod{11}   \\
\end{longtblr}

From the above tables one gets the (projective) vector ${\bf u}(d)=[u(\Emind)]$:
\begin{longtblr}
[label={L211},caption = {$L_2(11)$ data}]{cells={mode=imath},hlines,vlines,measure=vbox}
 \SetCell[c=1]{c} [u(\Emind)] & \SetCell[c=1]{c} d \\
 \SetCell[r=1]{c} (1:1) & d\not\equiv 0\,(11) \\
  \SetCell[r=1]{c} (1:11) & d\equiv 0\,(11) \\
\end{longtblr}

Using the above data, we obtain the following result:

\begin{proposition}
For every square-free integer $d$, the Faltings curve (circled)
in the twisted isogeny graph 
$
\!\!\begin{tikzcd} 
E_{1_k}^d \arrow[dash]{r}{11}  & E_{11_k}^d
\end{tikzcd}\!\!
$, for $k\in\{a,b\}$, is given by:

\[
\begin{tblr}{|c|c|c|}
\hline
\text{twisted isogeny graph} & \text{condition}  &\text{Prob} \\
\hline
 \circled[0.8]{$E_{1_k}^d$} \longrightarrow E_{11_k}^d  & d\not\equiv 0\,(11) & 11/12 \\
\hline
 E_{1_k}^d \longrightarrow \circled[0.8]{$E_{11_k}^d$}  &  d\equiv 0\,(11) & 1/12 \\
\hline
\end{tblr}
\]
\vskip 0.2truecm
\noindent
The column Prob gives the probability of the circled curve to be the Faltings curve.
\end{proposition}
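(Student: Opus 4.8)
The plan is to run through the four steps of the strategy of Section~\ref{strategy}. \emph{Steps 1--2.} Since the two base graphs were presented with minimal Weierstrass equations and so that the $11$-isogeny $\varphi\colon E_{1_k}\to E_{11_k}$ is normalized, Lemma~\ref{normal} applies directly to the N\'eron lattices $\Lambda_{1_k}$ and $\Lambda_{11_k}$ of $E_{1_k}$ and $E_{11_k}$: it gives $\operatorname{vol}(\Lambda_{1_k})=11\operatorname{vol}(\Lambda_{11_k})$, that is, $(\operatorname{vol}(\Lambda_{1_k}):\operatorname{vol}(\Lambda_{11_k}))=(11:1)$. In particular $\operatorname{vol}(\Lambda_{1_k})>\operatorname{vol}(\Lambda_{11_k})$, so $h(E_{1_k})<h(E_{11_k})$, which is precisely the assertion that $E_{1_k}$ is the Faltings curve in the untwisted graph, in agreement with the direct computation recorded above.

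\emph{Step 3.} The next task is the projective vector ${\bf u}(d)=[u(\Emind_{1_k}):u(\Emind_{11_k})]$. The key observation is that the only prime of bad reduction of both curves is $p=11$ (the discriminants are powers of $11$); at every prime $p\neq 11$ both $E_{1_k}$ and $E_{11_k}$ have good reduction, so their Kodaira symbol there is $\kI_0$ and the Pal value $u_p(\Emind)$ depends only on the residue class of $d$ and not on the curve — this is clear from the $\kI_0$ rows of Tables~\ref{PapaNo23} and \ref{Papa2}. Hence these contributions are common to the two vertices and cancel in the projective vector, which therefore equals $[u_{11}(\Emind_{1_k}):u_{11}(\Emind_{11_k})]$. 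As $11\neq 2,3$, Table~\ref{PapaNo23} applies verbatim: the $11$-signatures of $E_{1_a},E_{1_b}$ are $(1,1,2)$ and $(1,2,3)$, with Kodaira symbols $\kII,\kIII$ and $u_{11}(\Emind_{1_k})=1$ for every square-free $d$; the $11$-signatures of $E_{11_a},E_{11_b}$ are $(4,5,10)$ and $(3,5,9)$, with Kodaira symbols $\kII^*,\kIII^*$ and $u_{11}(\Emind_{11_k})=11$ when $11\mid d$ and $=1$ otherwise. This is exactly Table~\ref{L211}: ${\bf u}(d)=(1:1)$ if $11\nmid d$ and $(1:11)$ if $11\mid d$.

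\emph{Step 4.} By Proposition~\ref{pal}, for each vertex $E$ the N\'eron lattice of $E^d$ equals $\tfrac{u(\Emind)}{\sqrt d}\Lambda(\mathcal E)$, so its volume is $\tfrac{u(\Emind)^2}{|d|}\operatorname{vol}(\Lambda(\mathcal E))$; the factor $1/|d|$ is common to the two vertices, hence projectively
$$
\bigl(\operatorname{vol}(\Lambda_{1_k}^d):\operatorname{vol}(\Lambda_{11_k}^d)\bigr)=\bigl(u(\Emind_{1_k})^2\operatorname{vol}(\Lambda_{1_k}):u(\Emind_{11_k})^2\operatorname{vol}(\Lambda_{11_k})\bigr)=\bigl(11\,u(\Emind_{1_k})^2:u(\Emind_{11_k})^2\bigr).
$$
If $11\nmid d$ this reads $(11:1)$, so $\operatorname{vol}(\Lambda_{1_k}^d)>\operatorname{vol}(\Lambda_{11_k}^d)$ and $h(E_{1_k}^d)<h(E_{11_k}^d)$: the Faltings curve is $E_{1_k}^d$. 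If $11\mid d$ it reads $(11:121)=(1:11)$, so $\operatorname{vol}(\Lambda_{11_k}^d)>\operatorname{vol}(\Lambda_{1_k}^d)$ and the Faltings curve is $E_{11_k}^d$. Finally, Lemma~\ref{analyticNT} with $p=11$ says that the proportion of square-free integers divisible by $11$ is $\tfrac{1}{1+11}=\tfrac{1}{12}$, so the first case occurs with proportion $\tfrac{11}{12}$ and the second with proportion $\tfrac{1}{12}$, which are the values in the last column.

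The argument presents no real obstacle: the only nontrivial input is Table~\ref{L211_p11}, the Pal values at $p=11$, but since $11\notin\{2,3\}$ this lies in the easy regime of Table~\ref{PapaNo23}, where the $p$-signature alone pins down the Kodaira symbol, the $p$-minimality of the given model, and the Pal value $u_p(\Emind)$. The one point that deserves to be stated explicitly is why the good primes $p\neq 11$ contribute the same factor to both vertices — equivalently, that $u_p(\Emind)$ at a prime of good reduction is independent of the elliptic curve in the class — which is immediate from the $\kI_0$ entries of the relevant tables.
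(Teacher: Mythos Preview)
Your argument follows the paper's approach and is correct except at one point in Step~3. You assert that at any prime of good reduction the Pal value $u_p(\Emind)$ ``depends only on the residue class of $d$ and not on the curve'', and that this is ``clear from the $\kI_0$ rows of Tables~\ref{PapaNo23} and \ref{Papa2}''. For $p\neq 2$ this is true, but for $p=2$ it is false: Table~\ref{Papa2} (equivalently Table~\ref{pal}) has two distinct $\kI_0$ rows, namely $\operatorname{sig}_2=(0,0,0)$ and $\operatorname{sig}_2=(\geq 4,3,0)$, and for $d\equiv 2\pmod 4$ they yield $u_2=2^{-1}$ and $u_2=1$ respectively. So good reduction at $2$ by itself does not force the contributions at $2$ to cancel in the projective vector.

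What actually makes the cancellation work here, and what the paper invokes, is that within each of the two graphs the curves share the same $2$-adic signature: one reads off from the displayed signatures that $\operatorname{sig}_2(E_{1_a})=\operatorname{sig}_2(E_{11_a})=(0,0,0)$ and $\operatorname{sig}_2(E_{1_b})=\operatorname{sig}_2(E_{11_b})=(5,3,0)$, whence $u_2(\Emind_{1_k})=u_2(\Emind_{11_k})$ for every $d$. With this correction your Step~3 is complete, and Steps~1--2, Step~4, and the probability computation via Lemma~\ref{analyticNT} are all correct and coincide with the paper's proof.
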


\begin{proof}
Let $k \in \{a, b\}$, and for simplicity, denote $E_1 = E_{1_k}$ and $E_{11} = E_{11_k}$. The chosen models are already minimal, so we only need to determine the corresponding Pal value for each square-free integer $d$ using Table \ref{pal}. For all elliptic curves in the two isogeny classes, the only prime of bad reduction is $11$. Thus, for any $d$, we have $u_p(\mathcal E_{1}^d) = u_p(\mathcal E_{11}^d) = 1$, for $p \neq 2,11$. Now, consider the case $p=2$. Since $\operatorname{sig}_2(E_{1}) = \operatorname{sig}_2( E_{11})$, it follows that $u_2(\mathcal E_{1}^d) = u_2(\mathcal E_{11}^d)$. Finally, for $p=11$, analyzing the reduction modulo $11$ yields Table \ref{L211_p11}. This completes the proof of Table \ref{L211}.

Next, we have chosen the Weierstrass models such that the isogenies are normalized. If $\Lambda_1$ and $\Lambda_{11}$ denote the lattices associated with $E_1$ and $E_{11}$, respectively, then we have $\Lambda_1 = \lambda \langle 1, \tau \rangle$ and $\Lambda_{11} = \frac{1}{11}\lambda \langle 1, 11\tau \rangle$, for some $\lambda, \tau \in \mathbb{C}$ with $\operatorname{Im}(\tau) > 0$. Consequently, we obtain $\operatorname{vol}(E_{11}) = \frac{1}{11} \operatorname{vol}(E_1)$.

Finally, using the vector $[u(\Emin^d)]$ from Table \ref{L211}, we conclude that the volumes of the N\'eron lattices of $(\Emin_1^d, \Emin_{11}^d)$ satisfy:
\[
    \left\{
    \begin{array}{cll}
        \left(1:\frac{1}{11}\right) & \text{if } d \not \equiv 0 \pmod{11}, \\[2mm]
        \left(1:11 \right) & \text{if } d \equiv 0 \pmod{11}.
    \end{array}
    \right.
\]
This completes the proof.
\end{proof}

\section{Main result}\label{sec_MainTheorem}

Our main result is to establish the Faltings elliptic curve in every $\Q$-isogeny class of elliptic curves defined over $\Q$. Due to space constraints, the following statement requires a reference to the github repository \cite{githubrepo}.
For the $\Q$-isogeny graphs
arising from modular curves $X_0(N)$ of genus~0, the signatures
of the elliptic curves
in every $\Q$-isogeny class $\operatorname{Isog}$ depend on a rational parameter $t$ and they can be read from \cite{githubrepo}.
Similarly for the finite number of cases arising from modular curves 
$X_0(N)$ of genus $\geq 1$.

\begin{theorem}
\label{MainTheorem}
The following table gives the Faltings elliptic curve in every twisted $\Q$-isogeny class of elliptic curves defined over $\Q$.
The first column displays the type of the $\Q$-isogeny graph of
$\operatorname{Isog} = (E_{i_1}, \dots, E_{i_n})$, where $I=\{i_1,\dots,i_n\}$ (see Table \ref{first} for the definition of $I$ for each type). The second column is linked to the signatures of the elliptic curves $E_i$
as described in~\cite{githubrepo}. The third column displays the Faltings elliptic curve $E_i^d$ in the twisted graph
according to the conditions on the square-free integer $d$ (fourth column) and with probability given in the last column.
\end{theorem}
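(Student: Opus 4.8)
The plan is to run the strategy of Section~\ref{strategy} uniformly over the finite list of graph types in Table~\ref{types}, treating separately the parametrized families coming from modular curves $X_0(N)$ of genus~$0$ (Table~\ref{types0}) and the finitely many sporadic isogeny classes coming from $X_0(N)$ of genus $\geq 1$ (Table~\ref{typesgt0}). Throughout we use that $G$ and $G^d$ are canonically isomorphic as labeled graphs, so that ``the Faltings curve in $G^d$'' is well defined as a vertex of $G$, and that this vertex is unique by Stevens' theorem. The two worked examples of Section~\ref{examples} — the genus-$0$ type $L_3(9)$ and the genus-$\geq 1$ type $L_2(11)$ — are templates for the two halves of the argument; what remains is to carry out the same bookkeeping for every other type.

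For a genus-$0$ type we fix the hauptmodul $t=t(\tau)$ of \cite[Table 2]{alvaro} and, using Vélu's formulas, choose Weierstrass equations $E_i=E_i(c_4,c_6,\Delta)$ for the vertices so that every isogeny in the graph is normalized; the $c$-invariants and discriminants are then explicit polynomials in $t$, recorded in \cite{githubrepo}. For each prime $p$ one feeds $\operatorname{sig}_p(E_i)$ into the Tate-algorithm tables — Table~\ref{PapaNo23} for $p\neq 2,3$, Table~\ref{Papa3} for $p=3$, Table~\ref{Papa2} for $p=2$ — to read off simultaneously the scaling $u_p(E_i)$ needed to reach a $p$-minimal model of $E_i$ and the Pal value $u_p(\mathcal E_i^d)$ needed to reach a $p$-minimal model of the twist $\mathcal E_i^d$. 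Only the primes dividing the relevant values of $t$, together with $2$ and $3$, can contribute, so $u(E_i)=\prod_p u_p(E_i)$ and $u(\mathcal E_i^d)=\prod_p u_p(\mathcal E_i^d)$ are finite products, yielding the projective vectors ${\bf u}=[u(E_i)]$ and ${\bf u}_d=[u(\mathcal E_i^d)]$. Normalization of the isogenies together with Lemma~\ref{normal} fixes the ratios of the $\operatorname{vol}(\Lambda_{E_i})$ among the vertices, and by Pal's formula the Néron lattice of $E_i^d$ is $\tfrac{u(\mathcal E_i^d)}{\sqrt d}\,u(E_i)\,\Lambda_{E_i}$, so that
$$
\operatorname{vol}\bigl(\Lambda(\mathcal E_i^d)\bigr)=\frac{u(\mathcal E_i^d)^2\,u(E_i)^2}{|d|}\,\operatorname{vol}(\Lambda_{E_i}).
$$
Comparing these quantities (the common factor $|d|$ cancels) and applying $h(E)=-\tfrac12\log\operatorname{vol}(\Lambda(\mathcal E))$ singles out the vertex of minimal Faltings height; the outcome depends only on a few $p$-adic valuations of $t$ and on congruences of $d$ modulo $2$, $3$, and the bad primes. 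For the genus-$\geq 1$ types there are, up to twist, only finitely many isogeny classes, listed by their $j$-invariants in \cite[Table 4]{alvaro}; for each one we take minimal models, locate the Faltings curve directly by computer, and then run the same Pal-value computation to follow the Faltings curve through the twists. The probability in the last column then follows from Lemma~\ref{analyticNT}: the conditions on $d$ selecting a given vertex are congruence conditions of the shape ``$d\equiv 0\pmod p$'' (or its negation, or a condition modulo $4$) for the finitely many relevant primes, so the density of qualifying square-free $d$ is a product of factors $\tfrac{1}{1+p}$ and $\tfrac{p}{1+p}$, together with the analogous densities among square-free integers in each residue class modulo $4$.

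The main obstacle is the prime $p=2$. There the $2$-signature $\operatorname{sig}_2(E_i)$ frequently fails to determine the Kodaira symbol, so one must keep track of the auxiliary conditions $\cond{2_a},\dots,\cond{2_g}$ evaluated on the $c$-invariants of the parametrized curves, and even the Pal value $u_2(\mathcal E_i^d)$ can depend on $c_6(\mathcal E_i)$ modulo powers of $2$ and on $d\bmod 4$ (the ``$1^*$ or $2^*$'' and ``$4^*$ or $2^*$'' rows of Table~\ref{Papa2}). Combined with the types carrying the most vertices — $T_8$, $S_8$, $R_6$, $L_4$, $R_4(N)$ — and the comparatively large polynomial coefficients appearing in the parametrizations, this makes the case analysis heavy; its completeness is certified by the computations in \cite{githubrepo}.
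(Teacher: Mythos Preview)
Your proposal is correct and follows essentially the same approach as the paper's own proof: run the strategy of Section~\ref{strategy} type by type, using the worked examples $L_3(9)$ and $L_2(11)$ as templates, assemble the projective vectors $[u(E)]$ and $[u(\mathcal E^d)]$ from the Tate/Papadopoulos/Pal tables, compare Néron-lattice volumes via Lemma~\ref{normal} and Pal's formula, and read off the probabilities from Lemma~\ref{analyticNT}; the paper likewise defers the full case analysis to the data in \cite{githubrepo} and the summary Table~\ref{proof}. Your remark that the mod-$4$ congruences on $d$ from Table~\ref{Papa2} might survive to the final answer is slightly overcautious---in every type they collapse to a plain parity condition $d\equiv 0$ or $\not\equiv 0\pmod 2$, as one sees in Table~\ref{table_theorem}---but this does not affect the argument.
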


\begin{longtblr}
[label={table_theorem}, caption = {Faltings elliptic curve in twisted 
$\mathbb Q$-isogeny classes}]
{cells = {mode=imath},
hlines,
,vlines,
measure=vbox,
colspec  = cccccc,
hline{1,2,12,22,28,32,34,36,38,40,41,43,45,47,53,55,57,65,77,79, 81,83,87,96,100,104,110}={1.2pt,solid},
rowhead=1
}
 \text{type} &\SetCell[c=2]{c}  t & & \text{Faltings} & d & \text{Prob} \\
 \SetCell[r=10]{c} L_2(2)&  \SetCell[c=2]{c} v_2(t)\ge 8  & &  E_2^d &\text{all} & 1 \\
& \SetCell[r=2,c=2]{c}  v_2(t)=7   & &  E_2^d&d\not\equiv 0\,(2) & 2/3 \\
&   & & E_1^d &d\equiv 0\,(2) & 1/3 \\
& \SetCell[r=2,c=2]{c}  \begin{array}{c}
 v_2(t)=6\\
v_2(t+64)\equiv 2,3(4)
 \end{array}   & &  E_2^d&d\not\equiv 0\,(2) & 2/3 \\
&   & & E_1^d &d\equiv 0\,(2) & 1/3 \\
& \SetCell[r=2,c=2]{c} 
 \begin{array}{c}
 v_2(t)=6\\
v_2(t+64)\equiv 0,1(4)
 \end{array}&
 & E_1^d  &d\not\equiv 0\,(2) & 2/3 \\
&    & &  E_2^d  &d\equiv 0\,(2) & 1/3 \\
& \SetCell[r=2,c=2]{c} 
v_2(t)=5 &
 & E_1^d  &d\not\equiv 0\,(2) & 2/3 \\
&    & &  E_2^d  &d\equiv 0\,(2) & 1/3 \\
&  \SetCell[c=2]{c} v_2(t)\leq 4 & &E_1^d & \text{all} & 1 \\

\SetCell[r=10]{c} L_2(3) & \SetCell[c=2]{c} v_3(t)\ge 5  &  &  E_3^d & \text{all}& 1 \\
& \SetCell[r=2,c=2]{c} 
 v_3(t)=4    &  & E_3^d &d\not\equiv 0\,(3) & 3/4 \\
& & &  E_1^d &d\equiv 0\,(3) & 1/4 \\
& \SetCell[r=2, c=2]{c} 
\begin{array}{c}
 v_3(t)=3\\
 v_3(t+27)\equiv 3,4,5\,(6)
 \end{array}  
 &  & E_3^d &d\not\equiv 0\,(3) & 3/4 \\
& & &  E_1^d &d\equiv 0\,(3) & 1/4 \\
& \SetCell[r=2,c=2]{c} 
 \begin{array}{c}
 v_3(t)=3\\
 v_3(t+27)\equiv 0,1,2\,(6)
 \end{array} &
 & E_1^d   &d\not\equiv 0\,(3) & 3/4 \\
&   &  &  E_3^d &d\equiv 0\,(3) & 1/4 \\
& \SetCell[r=2,c=2]{c} 
 v_3(t)=2  &
 & E_1^d   &d\not\equiv 0\,(3) & 3/4 \\
&   &  &  E_3^d &d\equiv 0\,(3) & 1/4 \\
&  \SetCell[c=2]{c} v_3(t)\leq 1& &E_1^d & \text{all}& 1 \\
\SetCell[r=6]{c} L_2(5) &\SetCell[c=2]{c}  v_5(t)\ge 3 & & E_5^d &\text{all} & 1 \\
 & \SetCell[c=2,r=2]{c} v_5(t)=2  & &E_5^d & d\not\equiv 0\,(5) & 5/6\\ 
& &  &  E_1^d &d\equiv 0\,(5) &  1/6 \\
 &\SetCell[c=2,r=2]{c} v_5(t)=1&  & E_1^d  & d\not\equiv 0\,(5) & 5/6\\ 
& &&   E_5^d &d\equiv 0\,(5) &  1/6 \\
& \SetCell[c=2]{c} v_5(t)\le 0 & &E_1^d & \text{all} & 1 \\
\pagebreak
  \SetCell[r=4]{c}  L_2(7) &  \SetCell[c=2]{c} v_7(t)\ge 2 & & E_7^d & \text{all} & 1 \\
& \SetCell[c=2,r=2]{c} v_7(t)=1   &
&E_1^d  & d\not\equiv 0\,(7) & 7/8\\ 
& &  &  E_7^d &d\equiv 0\,(7) &  1/8 \\
&  \SetCell[c=2]{c} v_7(t)\le 0 & & E_1^d &\text{all}  & 1 \\
  \SetCell[r=2]{c}  L_2(11) &   \SetCell[c=2,r=2]{c}   \text{genus $\geq 1$}  &  & E_{1}^d & d\not\equiv 0\,(11) & 11/12 \\
&  & &  E_{11}^d  &  d\equiv 0\,(11) & 1/12 \\
  \SetCell[r=2]{c}  L_2(13) &  \SetCell[c=2]{c} v_{13}(t)>0 &\text{all}& E_{13}^d  & \SetCell[r=2]{c}  \text{all} &\SetCell[r=2]{c} 1 \\
 &\SetCell[c=2]{c}  v_{13}(t)\le 0 &\text{all} & E_1^d & &  \\
  \SetCell[r=2]{c}  L_2(17) &   \SetCell[c=2,r=2]{c}   \text{genus $\geq 1$}  & & E_{1}^d & d\not\equiv 0\,(17) & 17/18 \\
& &  &E_{17}^d  &  d\equiv 0\,(17) & 1/18 \\
  \SetCell[r=2]{c}  L_2(19) &   \SetCell[c=2,r=2]{c}   \text{genus $\geq 1$}   & & E_{1}^d & d\not\equiv 0\,(19) & 19/20 \\
& & & E_{19}^d  &  d\equiv 0\,(19) & 1/20 \\
  \SetCell[r=1]{c}  L_2(37) &  
  \SetCell[c=2, r=1]{c} \text{genus $\geq 1$}  &  & E_{1}^d &  \text{all} & 1 \\
  \SetCell[r=2]{c}  L_2(43) &  \SetCell[c=2,r=2]{c}   \text{genus $\geq 1$}  & & E_{1}^d & d\not\equiv 0\,(43) & 43/44 \\
& & & E_{43}^d &  d\equiv 0\,(43) & 1/44 \\
  \SetCell[r=2]{c}  L_2(67) &   \SetCell[c=2,r=2]{c}   \text{genus $\geq 1$}   & & E_{1}^d & d\not\equiv 0\,(67) & 67/68 \\
& & &  E_{67}^d &  d\equiv 0\,(67) & 1/68 \\
  \SetCell[r=2]{c}  L_2(163) &   \SetCell[c=2,r=2]{c}   \text{genus $\geq 1$}   & & E_{1}^d & d\not\equiv 0\,(163) & 163/164 \\
& & &  E_{163}^d  &  d\equiv 0\,(163) & 1/164 \\
 \SetCell[r=6]{c}  L_3(9)  &   \SetCell[c=2]{c}  v_3(t)\ge 3  &  &  E_9^d & \text{all} & 1 \\
& \SetCell[c=2,r=2]{c} v_3(t)=2   & & E_3^d &d\not\equiv 0\,(3) & 3/4 \\
&  & & E_9^d &d\equiv 0\,(3) & 1/4 \\
& \SetCell[c=2,r=2]{c} v_3(t)=1  &  & E_1^d& d\not\equiv 0\,(3) & 3/4\\ 
& & & E_3^d &d\equiv 0\,(3) &  1/4 \\
&  \SetCell[c=2]{c}  v_3(t)\leq 0 & &E_1^d & \text{all} & 1 \\
 \SetCell[r=2]{c}  L_3(25) &  \SetCell[c=2]{c}  v_5(t)\ge 1 &  &  E_{25}^d & \SetCell[r=2]{c}  \text{all} &\SetCell[r=2]{c}   1 \\
& \SetCell[c=2]{c}  v_5(t)\leq 0 &  &E_1^d & & \\
  \SetCell[r=2]{c}  L_4 &    \SetCell[c=2,r=2]{c} \text{genus $\geq 1$}   & & E_{3}^d & d\not\equiv 0\,(3) & 3/4 \\
& & &   E_{9}^d  &  d\equiv 0\,(3) & 1/4 \\
\pagebreak
\SetCell[r=8]{c} R_4(6) & 
 \SetCell[r=4]{c}  v_2(t)\geq 2 &  v_3(t)\geq 2 & E_6^d  & \text{all} &  1 \\
&  &  \SetCell[r=2]{c}v_3(t)=1 & E_2^d   &d\equiv 0\,(3)  & 1/4\\ 
&&   & E_6^d &d\not\equiv 0\,(3) &  3/4 \\
 &  &  v_3(t)\leq 0 & E_2^d  & \text{all} &  1 \\
 & \SetCell[r=4]{c}  v_2(t) \leq 1  &  v_3(t)\geq 2 & E_3^d & \text{all} &  1 \\
 &  &\SetCell[r=2]{c}  v_3(t) =1 & E_1^d   &d\equiv 0\,(3)   & 1/4\\ 
&&  & E_3^d &d\not\equiv 0\,(3) &  3/4 \\
& &  v_3(t)\leq 0 & E_1^d & \text{all} &  1 \\
\SetCell[r=12]{c} R_4(10)  & \SetCell[r=3]{c} v_2(t)>1 &  v_5(t)\neq 0 & \SetCell[r=2]{c} E_2^d & \SetCell[r=3]{c} \text{all} & \SetCell[r=3]{c} 1 \\
&  & \SetCell[r=1]{c}  \begin{array}{c}
 v_5(t)=0\\
 t\not\equiv 4\,(5)  \end{array}  & & & \\
 & & \SetCell[r=1]{c} 
   \begin{array}{c}
 v_5(t)=0\\
 t\equiv 4\,(5)  \end{array}  &
E_{10}^d 
&  & \\ 
& \SetCell[r=6]{c} v_2(t)=1 & \SetCell[r=2]{c}  v_5(t)\neq 0 & E_1^d     &d\equiv 0\,(2)   & 1/3\\ 
& &   &  E_2^d &d\not\equiv 0\,(2) &  2/3 \\
&  &\SetCell[r=2]{c}  \begin{array}{c}
 v_5(t)=0\\
 t\not\equiv 4\,(5)  \end{array}    & E_1^d     &d\equiv 0\,(2)   & 1/3\\ 
& &  &  E_2^d &d\not\equiv 0\,(2) &  2/3 \\
&  & \SetCell[r=2]{c} \SetCell[r=1]{c} 
   \begin{array}{c}
 v_5(t)=0\\
 t\equiv 4\,(5)  \end{array}  
& 
E_{5}^d    
      &d\equiv 0\,(2)  & 1/3\\ 
& & & E_{10}^d
&d\not\equiv 0\,(2) &  2/3 \\
 &  \SetCell[r=3]{c}  v_2(t)\le 0 & \SetCell[r=1]{c} 
   \begin{array}{c}
 v_5(t)=0\\
 t\equiv 4\,(5)  \end{array} 
  &
E_{5}^d  
& \SetCell[r=3]{c}    \text{all}  &  \SetCell[r=3]{c}  1 \\
& & v_5(t)\neq 0 & \SetCell[r=2]{c} E_1^d & & \\
  & & \begin{array}{c}
 v_5(t)=0\\
 t\not\equiv 4\,(5)  \end{array}  & & & \\
  \SetCell[r=2]{c}  R_4(14) &   \SetCell[c=2,r=2]{c}   \text{genus $\geq 1$}     & & E_{1}^d & d\not\equiv 0\,(7) & 7/8 \\
& & &  E_{7}^d  &  d\equiv 0\,(7) & 1/8 \\
  \SetCell[r=2]{c}  R_4(15) &   \SetCell[c=2,r=2]{c}   \text{genus $\geq 1$}     & & E_{1}^d & d\not\equiv 0\,(5) & 5/6 \\
& &  & E_{5}^d  &  d\equiv 0\,(5) & 1/6 \\
  \SetCell[r=2]{c}  R_4(21) &   \SetCell[c=2,r=2]{c}   \text{genus $\geq 1$}     & & E_{1}^d & d\not\equiv 0\,(3) & 3/4 \\
& &&  E_{3}^d  &  d\equiv 0\,(3) & 1/4 \\
\SetCell[r=4]{c} R_6  & \SetCell[r=2]{c}  v_2(t)>0 & v_3(t)\ne 0 &E_2^d &  \SetCell[r=4]{c}   \text{all}  & \SetCell[r=4]{c} 1\\
 &   & v_3(t)= 0  & E_{18}^d  & &  \\
&  \SetCell[r=2]{c} v_2(t)\le 0 & v_3(t)\ne 0  &E_1^d & &  \\
&  & v_3(t) = 0  & E_9^d  & &  \\
\pagebreak
\SetCell[r=9]{c} T_4 &\SetCell[c=2]{c}  v_2(t)\ge 6 & & E_4^d &  \text{all} & 1 \\
& \SetCell[c=2, r=2]{c} v_2(t)=5 & & E_4^d &d\equiv  0\,(2) & 1/3\\
& & & E_2^d & d\not\equiv  0\,(2)& 2/3\\
& \SetCell[c=2,r=2]{c} 
\begin{array}{c}
    v_2(t)=4  \\
    t/2^4\equiv 1\,(4)
\end{array} & &E_{12}^d & d\equiv  0\,(2) & 1/3\\
& & & E_{2}^d & d\not\equiv  0\,(2) & 2/3\\
&\SetCell[c=2]{c} \begin{array}{c}
    v_2(t)=4  \\
    t/2^4\equiv 3\,(4)
\end{array} & & E_{12}^d  &  \text{all}  & 1\\
& \SetCell[c=2,r=2]{c}
v_2(t)=3  & & E_{2}^d  & d\equiv  0\,(2) & 1/3\\
& & & E_{1}^d  & d\not\equiv  0\,(2) & 2/3\\
& \SetCell[c=2]{c} v_2(t)\le 2 & & E_{1}^d & \text{all}  & 1\\
\SetCell[r=4]{c}  T_6 & \SetCell[c=2]{c} v_2(t)\ge 3 & &E_{12}^d &\SetCell[r=4]{c}    \text{all} & \SetCell[r=4]{c}  1 \\
& \SetCell[c=2]{c}\begin{array}{c}
v_2(t)=2\\
t/2^2\equiv 3\,(4)
\end{array}
&& E_{8}^d  & &  \\
& \SetCell[c=2]{c}\begin{array}{c}
v_2(t)=2\\
t/2^2\equiv 1\,(4)
\end{array} &&
E_{22}^d  & &  \\
& \SetCell[c=2]{c} v_2(t)\le 1 && E_{1}^d  & &  \\
\SetCell[r=4]{c}  T_8 & \SetCell[c=2]{c}  v_2(t)\ge 2 & & E_{21}^d &  \SetCell[r=4]{c} \text{all} & \SetCell[r=4]{c} 1 \\
&  \SetCell[c=2]{c} \begin{array}{c}
v_2(t)=1\\
t/2\equiv 3\,(4)
\end{array}
& &
E_{81}^d  & &  \\
& \SetCell[c=2]{c}  \begin{array}{c}
v_2(t)=1\\
t/2\equiv 1\,(4)
\end{array} & &
E_{16}^d   & &  \\
&  \SetCell[c=2]{c} v_2(t)\le 0 & & E_{2}^d  & &  \\
 \SetCell[r=6]{c} S_8  & \SetCell[r=3]{c} v_3(t)>1 & v_2(t)\ne 0 & E_3^d &  \SetCell[r=6]{c}  \text{all}  & \SetCell[r=6]{c} 1 \\
& & 
\begin{array}{c}
v_2(t)=0\\[3pt]
t\equiv 3\,(4)
\end{array} 
& E_{12}^d & &  \\
& & \begin{array}{c}
v_2(t)=0\\[3pt]
t\equiv 1\,(4)
\end{array} 
 & E_{31}^d & &  \\
& \SetCell[r=3]{c}   v_3(t)\le 0 & v_2(t)\ne 0 & E_{1}^d & &  \\
& & 
\begin{array}{c}
v_2(t)=0\\[3pt]
t\equiv 3\,(4)
\end{array} 
& E_{4}^d  & &  \\
& & \begin{array}{c}
v_2(t)=0\\[3pt]
t\equiv 1\,(4)
\end{array} 
 & E_{21}^d   & &  \\

\end{longtblr}

\begin{proof}

For each possible type of $\mathbb{Q}$-isogeny graph, we follow the approach outlined in the examples in Section \ref{examples}. This allows us to construct, for each type, tables similar to those obtained there (Tables \ref{L39} and \ref{L211}), which contain the relevant information needed to establish Table \ref{table_theorem}. The corresponding data is summarized in Table \ref{proof}. For the complete details for each type, we refer to the supplementary material \cite{githubrepo}.

To complete the proof and determine the Faltings curve for each type, we follow a procedure similar to the one described in Section \ref{examples} (for the types $L_3(9)$ and $L_2(11)$). Let $\operatorname{Isog} = (E_{i_1}, \dots, E_{i_n})$, where $I=\{i_1,\dots,i_n\}$, $i_1=1$,  (see Table \ref{first} for the definition of $I$ for each type) denotes the $\mathbb{Q}$-isogeny class of elliptic curves associated with the graph $G$, and let $\text{type} = \operatorname{type}(G)$. Since the signatures are chosen so that the isogenies are normalized, there exist $v_i \in \mathbb{Q}$, $i\in I$, with $v_1 = 1$, such that, projectively, we have (see Table \ref{first}):  
\[
(\operatorname{vol}(E_i) \,:\, i\in I ) =(v_i\,:\, i\in I).
\]
Next, we obtain $[u(E)]$ and $[u(\Emind)]$ from Table \ref{proof} to determine which curve corresponds to the Faltings curve. If \( [u(E)] = (u_i\,:\, i\in I) \), then the volumes of the Néron lattices of \( (E_i\,:\, i\in I) \) satisfy  
\[
(\operatorname{vol}(\Emin_i)\,:\, i\in I) = (u_i^2 v_i \,:\, i\in I).
\]
Similarly, if $[u(\Emind)] = (\widetilde{u}_i\,:\, i\in I)$, then the volumes of the Néron lattices of $(\Emin_i^d\,:\, i\in I)$ satisfy  
\[
(\widetilde{u}_i^2 u_i^2 v_i \,:\, i\in I).
\]
Thus, the Faltings curve is the elliptic curve $E_k^d$ such that  
\[
\widetilde{u}_k^2 u_k^2 v_k = \max\{\widetilde{u}_i^2 u_i^2 v_i \mid  i\in I \}.
\]
\end{proof}

\begin{longtblr}
[caption = {Data for the proof},label=proof]
{cells = {mode=imath},hlines,vlines,measure=vbox,colspec  = ccccc,
hline{1,2,8,14,20,24,26,28,30,32,34,35,37,39,45,47,49,57,66,68,70,72,78,88,92,96,103 }={1.2pt,solid},
hline{50,52,53,58,62,63,73,75,76,97,100,101}={0.8pt,solid},
rowhead=1
}
 \text{type$(G)$} & t & [u(E)]  &  [u(\Emind)] &  d \\
 \SetCell[r=6]{c}  L_2(2) & \SetCell[r=1]{c} v_2(t)\ge 8 & \SetCell[r=1]{c} (1:2) & \SetCell[r=1]{c}(1:1) &  \text{all}\\
& \SetCell[r=1]{c} v_2(t)=7 & \SetCell[r=2]{c} (1:2) & \SetCell[r=1]{c} (1:1) & d\not\equiv 0\,(2)\\
& \SetCell[r=1]{c} \begin{array}{c}
 v_2(t)=6\\
v_2(t+64)\equiv 2,3(4)
 \end{array} & & \SetCell[r=1]{c} (2:1) & d\equiv 0\,(2) \\
&  \SetCell[r=1]{c} \begin{array}{c}
 v_2(t)=6\\
v_2(t+64)\equiv 0,1(4)
 \end{array}  & \SetCell[r=2]{c} (1:1) & \SetCell[r=1]{c} (1:1) & d\not\equiv 0\,(2)\\
& \SetCell[r=1]{c} v_2(t)=5 & & \SetCell[r=1]{c} (1:2) & d\equiv 0\,(2) \\
& \SetCell[r=1]{c}  v_2(t)\le 4 &\SetCell[r=1]{c} (1:1) & \SetCell[r=1]{c}(1:1) &  \text{all}\\
  \SetCell[r=6]{c}  L_2(3) &  v_3(t)\ge 5 & \SetCell[r=1]{c} (1:3) & \SetCell[r=1]{c}(1:1) &  \text{all}\\
&  v_3(t)=4 & \SetCell[r=2]{c} (1:3) & \SetCell[r=1]{c} (1:1) & d\not\equiv 0\,(3) \\
&  \begin{array}{c}
 v_3(t)=3\\
 v_3(t+27)\equiv 3,4,5\,(6)
 \end{array} & & \SetCell[r=1]{c} (3:1) & d\equiv 0\,(3) \\
 & \begin{array}{c}
 v_3(t)=3\\
 v_3(t+27)\equiv 0,1,2\,(6)
 \end{array}  & \SetCell[r=2]{c} (1:1) & \SetCell[r=1]{c} (1:1) & d\not\equiv 0\,(3)\\
 & v_3(t)=2 & & \SetCell[r=1]{c} (1:3) & d\equiv 0\,(3) \\
 & v_3(t)\le 1 &\SetCell[r=1]{c} (1:1) & \SetCell[r=1]{c}(1:1) & \text{all}\\
 \SetCell[r=6]{c}  L_2(5) &   \SetCell[r=1]{c} v_5(t)\ge 3 & \SetCell[r=1]{c} (1:5) & \SetCell[r=1]{c} (1:1) & \text{all} \\
& \SetCell[r=2]{c}  v_5(t)=2 & \SetCell[r=2]{c} (1:5) & \SetCell[r=1]{c} (1:1) & d\not\equiv 0\,(5)\\
& & & \SetCell[r=1]{c} (5:1) & d\equiv 0\,(5)\\
 & \SetCell[r=2]{c}  v_5(t)=1 & \SetCell[r=2]{c} (1:1) & \SetCell[r=1]{c} (1:1) & d\not\equiv 0\,(5) \\
& & & \SetCell[r=1]{c} (1:5) & d\equiv 0\,(5) \\
& \SetCell[r=1]{c} v_5(t)\le 0 & \SetCell[r=1]{c} (1:1) & \SetCell[r=1]{c}  (1:1) & \text{all} \\
 \pagebreak
 \SetCell[r=4]{c}  L_2(7) &  \SetCell[r=1]{c} v_7(t)\ge 2 & \SetCell[r=1]{c} (1:7) & \SetCell[r=1]{c} (1:1) & \text{all}  \\
& \SetCell[r=2]{c}  v_7(t)=1 & \SetCell[r=2]{c} (1:1) & \SetCell[r=1]{c} (1:1) & d\not\equiv 0\,(7)\\
& & & \SetCell[r=1]{c} (1:7) & d\equiv 0\,(7) \\
& \SetCell[r=1]{c} v_7(t)\le 0 & \SetCell[r=1]{c} (1:1) & \SetCell[r=1]{c}  (1:1) & \text{all}\\
\SetCell[r=2]{c}  L_2(11) & \SetCell[r=2]{c}   \text{genus $\geq 1$} & \SetCell[r=2]{c}  (1:1)  &  (1:1) & d\not\equiv 0\,(11)\\
& &   &  (1:11) & d\equiv 0\,(11) \\
\SetCell[r=2]{c}  L_2(13) &  v_{13}(t)>0 & (1:13) &(1:1) & \SetCell[r=2]{c} \text{all} \\
& v_{13}(t)\le 0 & (1:1) &  (1:1) &  \\
\SetCell[r=2]{c}  L_2(17) & \SetCell[r=2]{c}    \text{genus $\geq 1$}&  \SetCell[r=2]{c} (1:1)  &  (1:1) & d\not\equiv 0\,(17)\\
& &     &  (1:17) & d\equiv 0\,(17) \\
\SetCell[r=2]{c}  L_2(19) &  \SetCell[r=2]{c}    \text{genus $\geq 1$}& \SetCell[r=2]{c}  (1:1)  &  (1:1) & d\not\equiv 0\,(19)\\
& &    &  (1:19) & d\equiv 0\,(19) \\
\SetCell[r=1]{c}  L_2(37) &  \SetCell[r=1]{c}    \text{genus $\geq 1$} & (1:1)  &  (1:1) & \text{all}\\
\SetCell[r=2]{c}  L_2(43) & \SetCell[r=2]{c}    \text{genus $\geq 1$}& \SetCell[r=2]{c}  (1:1)  &  (1:1) & d\not\equiv 0\,(43)\\
& &     &  (1:43) & d\equiv 0\,(43) \\
\SetCell[r=2]{c}  L_2(67) & \SetCell[r=2]{c}    \text{genus $\geq 1$}& \SetCell[r=2]{c}  (1:1)  &  (1:1) & d\not\equiv 0\,(67)\\
& &    &  (1:67) & d\equiv 0\,(67) \\
\SetCell[r=2]{c}  L_2(163) & \SetCell[r=2]{c}    \text{genus $\geq 1$}&  \SetCell[r=2]{c} (1:1)  &  (1:1) & d\not\equiv 0\,(163)\\
& &   &  (1:163) & d\equiv 0\,(163) \\
\SetCell[r=6]{c}  L_3(9) & v_3(t)\ge 3 & (1:3:3^{2}) & (1:1:1) & \text{all} \\
& \SetCell[r=2]{c}  v_3(t)=2 & \SetCell[r=2]{c} (1:3:3) & (1:1:1) & d\not\equiv 0\,(3) \\
& & & \SetCell[r=1]{c} (1:1:3) & d\equiv 0\,(3) \\
& \SetCell[r=2]{c} v_3(t)=1 & \SetCell[r=2]{c} (1:1:1) & (1:1:1) & d\not\equiv 0\,(3) \\
& & & \SetCell[r=1]{c} (1:3:3) & d\equiv 0\,(3) \\
 & \SetCell[r=1]{c} v_3(t)\le 0 & \SetCell[r=1]{c} (1:1:1) & (1:1:1) & \text{all} \\
\SetCell[r=2]{c}  L_3(25) & v_5(t)\ge 1 & (1:5:5^{2}) & (1:1:1) &  \SetCell[r=2]{c} \text{all} \\
&  v_5(t)\le 0 & (1:1:1) & (1:1:1) &  \\
\SetCell[r=2]{c}  L_4 & \SetCell[r=2]{c}    \text{genus $\geq 1$}& \SetCell[r=2]{c}  (1:1:1:1)  &  \left(1:1:1:1\right)  & d\not\equiv 0\,(3)\\
& &   & \left(1:1:3:3\right) & d\equiv 0\,(3) \\
\pagebreak
\SetCell[r=8]{c} R_4(6)&  &[u_2(E)] & [u_2(\Emind)]  &\\
& \SetCell[r=1]{c} v_2(t)\geq 2 &  (1:2:1:2) & (1:1:1:1) &   \SetCell[r=2]{c} \text{all} \\
& \SetCell[r=1]{c} v_2(t)\leq 1 & (1:1:1:1) & (1:1:1:1) &   \\
& & [u_3(E)] & [u_3(\Emind)] &\\
& \SetCell[r=1]{c} v_3(t)\ge 2 & (1:1:3:3) & (1:1:1:1) &   \text{all} \\
& \SetCell[r=2]{c} v_3(t)=1 & \SetCell[r=2]{c} (1:1:1:1) & (1:1:1:1) & d\not\equiv 0\,(3)   \\
& &  & (1:1:3:3) & d\equiv 0\,(3)  \\
& \SetCell[r=1]{c} v_3(t)\leq 0 & (1:1:1:1) & (1:1:1:1) & \text{all}  \\
 \SetCell[r=9]{c} R_4(10) &  &[u_2(E)] & [u_2(\Emind)] &\\
 &   v_2(t)>1 &  (1:2:1:2) & (1:1:1:1) &   \text{all} \\
& \SetCell[r=2]{c} v_2(t)=1 & \SetCell[r=2]{c} (1:2:1:2) & (1:1:1:1) & d\not\equiv 0\,(2)   \\
& &  &  (2:1:2:1) & d\equiv 0\,(2)  \\
 & v_2(t)\leq 0 &  (1:1:1:1) & (1:1:1:1) &   \text{all}\\
 &  & [u_5(E)] & [u_5(\Emind)]  &\\
& v_5(t)\ne 0 & \SetCell[r=2]{c} (1:1:1:1) & \SetCell[r=2]{c} (1:1:1:1) &   \SetCell[r=2]{c} \text{all} \\
  &   \begin{array}{c}
 v_5(t)=0\\
 t\not \equiv 4\,(5)  \end{array} & & & \\
 &  \begin{array}{c}
 v_5(t)=0\\
 t\equiv 4\,(5)   \end{array} &  (1:1:5:5) &  (1:1:1:1) &  \text{all}\\
\SetCell[r=2]{c}  R_4(14) & \SetCell[r=2]{c}    \text{genus $\geq 1$}& \SetCell[r=2]{c}  (1:1:1:1)  &  (1:1:1:1)  & d\not\equiv 0\,(7)\\
& &   & (1:1:7:7) & d\equiv 0\,(7) \\
\SetCell[r=2]{c}  R_4(15) & \SetCell[r=2]{c}   \text{genus $\geq 1$} & \SetCell[r=2]{c}  (1:1:1:1)  & (1:1:1:1)   & d\not\equiv 0\,(5)\\
& &   & (1:1:5:5)  & d\equiv 0\,(5) \\
\SetCell[r=2]{c}  R_4(21) & \SetCell[r=2]{c}   \text{genus $\geq 1$} & \SetCell[r=2]{c}  (1:1:1:1)  & (1:1:1:1)  & d\not\equiv 0\,(3)\\
& &   &(1:3:1:3) & d\equiv 0\,(3) \\
\SetCell[r=6]{c} R_6 & & [u_2(E)] & [u_2(\Emind)] & \\
& \SetCell[c=1]{c} v_2(t)>0 & \SetCell[r=1]{c} (1:2:1:2:1:2) & (1:\cdots :1) & \SetCell[r=2]{c}\text{all} \\
& \SetCell[r=1]{c} v_2(t)\leq 0 & \SetCell[r=1]{c} (1:1:1:1:1:1) & (1:\cdots :1)&  \\
&  & [u_3(E)] & [u_3(\Emind)] & \\
& \SetCell[c=1]{c} v_3(t)=0 & \SetCell[r=1]{c} (1:1:3:3:3^2:3^2) &(1:\cdots :1)&    \SetCell[r=2]{c}\text{all} \\
& \SetCell[r=1]{c} v_3(t)\ne 0 & \SetCell[r=1]{c} (1:1:1:1:1:1) & (1:\cdots :1)&  \\
 \pagebreak
\SetCell[r=10]{c} T_4 & v_2(t)\ge 6  & \SetCell[r=1]{c} (1:2:2^2:1) & (1:1:1:1) &  \text{all}\\
& \SetCell[r=2]{c} v_2(t)=5 &  \SetCell[r=2]{c} (1:2:2:2) & (1:1:1:1) & d\not\equiv 0\,(2) \\
& & & \SetCell[r=1]{c} (1:1:2:1) & d\equiv  0\,(2)\\
 & \SetCell[r=2]{c}
\begin{array}{c}
v_2(t)=4  \\
t/2^4\equiv 1\,(4)
\end{array}
& \SetCell[r=2]{c} (1:2:2:2) & (1:1:1:1) & d\not\equiv  0\,(2)\\
& &  & \SetCell[r=1]{c} (1:1:1:2) & d\equiv 0\,(2)\\
& \SetCell[r=2]{c}
\begin{array}{c}
v_2(t)=4  \\
t/2^4\equiv 3\,(4)
\end{array}
 & \SetCell[r=2]{c} (1:2:2:2^2) & \SetCell[r=2]{c} (1:1:1:1) & \SetCell[r=2]{c}\text{all} \\
 & & & \\
& \SetCell[r=2]{c} v_2(t)=3  & \SetCell[r=2]{c} (1:1:1:1) & (1:2:2:2) & d\not\equiv  0\,(2)\\
& &  & \SetCell[r=1]{c} (1:1:1:1) & d\equiv 0\,(2) \\
& v_2(t)\le 2  & \SetCell[r=1]{c} (1:1:1:1) & (1:1:1:1) & \text{all}\\
\SetCell[r=4]{c} T_6 &  v_2(t)\ge 3 & \SetCell[r=1]{c} (1:2:2^2:2:2:2) & (1:\cdots :1)&  \SetCell[r=4]{c}\text{all}\\
& \SetCell[r=1]{c} \begin{array}{c}
v_2(t)=2\\
t/2^2\equiv 3\,(4)
\end{array}  & \SetCell[r=1]{c} (1:2:2:2^2:2^2:2^3) &(1:\cdots :1) &  \\
  &\SetCell[r=1]{c} \begin{array}{c}
v_2(t)=2\\
t/2^2\equiv 1\,(4)
\end{array}  & \SetCell[r=1]{c} (1:2:2:2^2:2^3:2^2) & (1:\cdots :1) &   \\
 & \SetCell[r=1]{c} v_2(t)\le 1 & \SetCell[r=1]{c} (1:1:1:1:1:1) & (1:\cdots :1) & \\
\SetCell[r=4]{c} T_8 &  v_2(t)\ge 2 & \SetCell[r=1]{c} (1:2:2^2:2:2:2:2:2) & (1:\cdots :1)&  \SetCell[r=4]{c}\text{all}\\
& \SetCell[r=1]{c} \begin{array}{c}
v_2(t)=1\\
t/2\equiv 3\,(4)
\end{array}  & \SetCell[r=1]{c} (1:2:2:2^2:2^2:2^3:2^4:2^3) & (1:\cdots :1)&   \\
& \SetCell[r=1]{c} \begin{array}{c}
v_2(t)=1\\
t/2\equiv 1\,(4)
\end{array}  & \SetCell[r=1]{c} (1:2:2:2^2:2^2:2^3:2^3:2^4) & (1:\cdots :1) &   \\
& \SetCell[r=1]{c} v_2(t)\le 0 & \SetCell[r=1]{c} (1:1:1:1:1:1:1:1) & (1:\cdots :1)&   \\
\SetCell[r=7]{c} S_8 & &   [u_2(E)] & [u_2(\Emind)] &  \\
& v_2(t)\ne 0& \SetCell[r=1]{c} (1:1:1:1:1:1:1:1) & (1:\cdots :1) & \SetCell[r=3]{c}\text{all}\\
& \SetCell[r=1]{c} \begin{array}{c}
v_2(t)=0\\
t/2\equiv 3\,(4)
\end{array}  & \SetCell[r=1]{c} (1:1:2:2:2:2^2:2^2:2) &(1:\cdots :1) & \\
& \SetCell[r=1]{c} \begin{array}{c}
v_2(t)=0\\
t/2\equiv 1\,(4)
\end{array}  & \SetCell[r=1]{c} (1:1:2:2:2^2:2:2:2^2)  & (1:\cdots :1)&  \\
&  & [u_3(E)] & [u_3(\Emind)] & \\
&\SetCell[r=1]{c} v_3(t)>0& \SetCell[r=1]{c} (1:3:1:3:1:3:1:3) & (1:\cdots :1)& \SetCell[r=2]{c}\text{all}\\
& \SetCell[r=1]{c} v_3(t)\le 0& \SetCell[r=1]{c} (1:1:1:1:1:1:1:1) & (1:\cdots :1) &\\
\end{longtblr}

\newpage

\begin{longtblr}
[caption = {Data for the proof (first volumes)},label=first]
{cells = {mode=dmath},hlines,vlines,measure=vbox,colspec  = cll,rowhead=1
}
 \text{type$(G)$} &  \operatorname{Isog} & (\operatorname{vol}(E)\,:\,E\in\operatorname{Isog}) \\
  \SetCell[r=2]{c} L_2(p) & (E_1,E_p) & \left(1:\frac{1}{p}\right)\\ 
  &   \SetCell[c=2]{l} p=2,3,5,7,11,13,17,19,37,43,67,163\\
  \SetCell[r=2]{c}  L_3(p^2)& (E_1,E_p,E_{p^2}) & \left(1:\frac{1}{p}:\frac{1}{p^2}\right)\\ 
  &  \SetCell[c=2]{l} p=3,5  \\
  L_4 & (E_1,E_3,E_{9},E_{27}) & \left(1:\frac{1}{3}:\frac{1}{9}:\frac{1}{27}\right)  \\
   \SetCell[r=2]{c}  R_4(pq) &  (E_1,E_p,E_q,E_{pq}) & \left(1:\frac{1}{p}:\frac{1}{q}:\frac{1}{pq}\right)\\
   &  \SetCell[c=2]{l} (p,q)=(2,3),(2,5),(2,7),(3,5),(3,7)  \\  
  R_6 & (E_1,E_2,E_3,E_{6},E_9,E_{18}) &  \left(1:\frac{1}{2}:\frac{1}{3}:\frac{1}{6}:\frac{1}{9}:\frac{1}{18}\right)  \\
  T_4 &  (E_1,E_2,E_4,E_{12}) & \left(1:\frac{1}{2}:\frac{1}{4}:\frac{1}{4}\right)  \\
  T_6 &  (E_1,E_2,E_{12},E_4,E_8,E_{12}) & \left(1:\frac{1}{2}:\frac{1}{4}:\frac{1}{4}:\frac{1}{8}:\frac{1}{8}\right)  \\
  T_8 &  (E_1,E_2,E_{21},E_4,E_{41},E_8,E_{81},E_{16}) & \left(1:\frac{1}{2}:\frac{1}{4}:\frac{1}{4}:\frac{1}{8}:\frac{1}{8}:\frac{1}{16}:\frac{1}{16}\right)  \\
  S_8 &  (E_1,E_3,E_2,E_6,E_{21},E_{12},E_{4},E_{31})  & \left(1:\frac{1}{3}:\frac{1}{2}:\frac{1}{6}:\frac{1}{4}:\frac{1}{12}:\frac{1}{4}:\frac{1}{12}\right)  \\
\end{longtblr}


\end{document}